\documentclass[12pt,reqno]{amsart}
\usepackage[utf8]{inputenc}
\usepackage[dvipsnames]{xcolor}
\usepackage{tikz}
\usepackage{stmaryrd}
\usetikzlibrary{matrix,arrows,decorations.pathmorphing}
\usepackage{graphicx}
\usepackage{amssymb}
 \usepackage[stable]{footmisc}
\usepackage{amsmath,mathtools}
\usepackage{amsthm}
\usepackage{fullpage}
\usepackage{caption}
\usepackage{mathrsfs}
\usepackage{thmtools}
\usepackage{blkarray}
\usepackage{multirow}
\usepackage{picinpar} 
\usepackage{tikz-cd}
\usepackage{color}
\usepackage{verbatim}
\usepackage{hyperref}
\hypersetup{colorlinks=true, linkcolor=black, citecolor = OliveGreen, urlcolor = black}
\usepackage{amssymb}
\usepackage{amsmath}
\usepackage[shortlabels]{enumitem}
\usepackage{colonequals}
\usepackage{faktor}
\usepackage{color}
\usepackage{mathrsfs}
\usepackage[all]{xy}
\usepackage{comment}
\usepackage{mathpazo}
\usepackage{euler}
\usepackage[noabbrev,capitalise]{cleveref}
\usepackage[T1]{fontenc}
\usepackage[margin=1.in]{geometry}
\tikzset{
  closed/.style = {decoration = {markings, mark = at position 0.5 with { \node[transform shape, xscale = .8, yscale=.4] {/}; } }, postaction = {decorate} },
  open/.style = {decoration = {markings, mark = at position 0.5 with { \node[transform shape, scale = .7] {$\circ$}; } }, postaction = {decorate} }
}

\makeatother

\newtheorem{lemma}{Lemma}
\newtheorem{proposition}{Proposition}
\newtheorem{corollary}{Corollary}
\newtheorem{definition}{Definition}

\newtheorem{remark}{Remark}
\newtheorem{theorem}{Theorem}

\newtheorem{question}{Question}

\newtheorem*{claim*}{Claim}

\newtheorem{manualtheoreminner}{Theorem}
\newenvironment{manualtheorem}[1]{%
  \renewcommand\themanualtheoreminner{#1}%
  \manualtheoreminner
}{\endmanualtheoreminner}

\newtheorem{manualpropositioninner}{Proposition}
\newenvironment{manualproposition}[1]{%
  \renewcommand\themanualpropositioninner{#1}%
  \manualpropositioninner
}{\endmanualpropositioninner}

\newtheorem{manualremarkinner}{Remark}
\newenvironment{manualremark}[1]{%
  \renewcommand\themanualremarkinner{#1}%
  \manualremarkinner
}{\endmanualremarkinner}

\def\Spec{\operatorname{Spec}}

\title{Remarks on a theorem of Silverman}
\author{Khai-Hoan Nguyen-Dang\\
Quang-Khai Nguyen}
\date{\today}
\keywords{elliptic curve, abelian variety, reduction, height function, unlikely intersections.}
\subjclass[2020]{primary 11G05, 14G05, 14G17; secondary 11G50, 14K99}
\begin{document}

\begin{abstract}
   Motivated by a theorem of Silverman, we consider the following problem. Let \(A\) be an abelian variety over a global field \(K\). Given a non-torsion point \(P \in A(K)\), for a sufficiently large positive integer \(n\), whether there exists a place \(v\) of \(K\) such that the order of the reduction of \(P\) modulo \(v\) is \(n\)? In this article, we first show that this holds for an elliptic curve over a global function field of positive characteristic \(p>3\) and for sufficiently large positive integers \(n\) coprime to \(p\). In the second part of the paper, we consider its relative version over \(\mathbb C\). More precisely, let \(\pi: \mathcal{A} \rightarrow S\) be an abelian scheme over some variety \(S\) over \(\mathbb C\), and let \(P\) be a non-torsion section of \(\pi\). If the Betti map associated to \(P\) is generically submersive, then for every sufficiently large \(n\), there is a point \(s\) in \(S(\mathbb C)\) such that \(P(s)\) is a point of order \(n\) in the corresponding fiber.
\end{abstract}
\maketitle
\tableofcontents

\section{Introduction}
In the 19th century, Bang \cite{Ban86} and Zsigmondy \cite{Zsi92} demonstrated the existence of a constant \(M > 0\) so that for any nonzero rational number \(x\) with \(x \neq \pm 1\) and every integer \(n > M\), there exists a prime ideal \(\mathfrak{p}\) of \(\mathbb Z\) such that the order of \(x\) modulo \(\mathfrak{p}\) equals \(n\). This foundational result has since been generalized by various mathematicians and has found numerous applications. Notably, Schinzel \cite{Sch74} extended this theorem to number fields. He established that for any number field \(K\), there exists a constant \(n(K)\) dependent only on \(K\) so that for every \(x \in K^\times\) that is not a root of unity and every integer \(n > n(K)\), there exists a prime ideal \(\mathfrak{p}\) of \(\mathcal{O}_K\) such that the order of \(x\) modulo \(\mathfrak{p}\) is \(n\).


It is then natural to study these results in the setting of algebraic groups. Let $G$ be an algebraic group over a number field $K$. By spreading out, we can find an integral model $\mathcal{G}$ of $G$ over some open subscheme $\mathcal U$ of $\Spec \mathcal{O}_K$ such that $\mathcal{G}$ is a $\mathcal U$-group scheme. For a closed point $v\in \mathcal U$ (or equivalently, a \emph{place} of $K$), denote by  $\kappa_v$ the residue field at $v$. Then, we have the \emph{reduction homomorphism} at $v$, given by 
$$\mathrm{red}_v:\mathcal G(\mathcal{U})\to \mathcal G(\kappa_v), P\mapsto P\bmod v.$$
For $P\in \mathcal G(\mathcal U)$, we denote by $\mathrm{ord}_v(P)$ the order of $\mathrm{red}_v(P)$ in the finite group $\mathcal{G}(\kappa_v)$. For a complete account of reductions and integral models, we refer to Perucca's thesis \cite{Per08}. Now, we observe that a necessary condition for the existence of $v\in U$ such that $\mathrm{ord}_v(P)=n$ for every integer $n$ large is that $P$ is a non-torsion point. In the spirit of the results of Bang, Zsigmondy and Schinzel, one expects that this is also a sufficient condition.

For an elliptic curve \(E\) over $\mathbb Q$ and a non-torsion point \(P\in E(\mathbb{Q})\), Silverman \cite{Sil88} shows that for every sufficiently large positive integer \(n\), there exists a place $v$ such that \(\mathrm{ord}_v(P)=n\) by using \emph{height estimates} and \emph{Siegel's theorem}. In  \cite{CH99}, Cheon--Hahn extend Silverman's theorem to elliptic curves over any number field. The result is stated as follows.

\begin{theorem}(Silverman, Cheon--Hahn)\label{Theorem: Cheon-Hahn}
    Let $E$ be an elliptic curve over a number field and 
$P \in E(K)$ be a non-torsion point. Then for every sufficiently large integer $n$, there exists a place $v$ of good reduction so that the order of $P$  mod $v$ equals  $n$. Moreover, for all but finitely many $P$, there exists such  $v$ for each   $n>0$.
\end{theorem}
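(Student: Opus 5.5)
We follow Silverman's strategy via elliptic divisibility sequences. Fix a Weierstrass model of $E$ over $\mathcal O_K$, enlarging the finite set $S$ of bad places if needed. For each $m\ge 1$ write the divisor of the $x$-coordinate of $mP$ as
\[
(x(mP)) = \frac{\mathfrak A_m}{\mathfrak D_m^2},
\]
with $\mathfrak A_m,\mathfrak D_m$ coprime integral ideals. For $v\notin S$ we have $\mathrm{ord}_v(P)\mid m$ if and only if $v\mid \mathfrak D_m$. Hence $\mathrm{ord}_v(P)=n$ exactly when $v$ is a \emph{primitive divisor} of $\mathfrak D_n$, meaning $v\mid\mathfrak D_n$ but $v\nmid\mathfrak D_m$ for all $m<n$. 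The theorem therefore reduces to showing that $\mathfrak D_n$ has a primitive divisor outside $S$ for all large $n$.

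The argument compares sizes. We use the divisibility structure of the formal group at each $v\notin S$. If $r=\mathrm{ord}_v(P)$ and $e_v=v(\mathfrak D_r)$, then for $n=rk$ we have $v(\mathfrak D_n)=e_v+v(k)$, with a bounded correction at the finitely many $v$ of small residue characteristic relative to the ramification. Suppose $\mathfrak D_n$ has no primitive divisor. Then every $v\mid\mathfrak D_n$ divides some $\mathfrak D_{n/q}$ with $q$ a prime dividing $n$. This gives
\[
\log N\mathfrak D_n \le \sum_{q\mid n}\bigl(\log N\mathfrak D_{n/q} + [K:\mathbb Q]\log q\bigr) + O(1).
\]

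For the lower bound we use the Néron–Tate height. The height $\hat h(nP)=n^2\hat h(P)$ decomposes into local heights, and the non-archimedean parts outside $S$ contribute $\tfrac{1}{[K:\mathbb Q]}\log N\mathfrak D_n$. The remaining contributions come from archimedean places and places in $S$, and the key input is a bound on them. By Siegel's theorem in quantitative form, namely David's lower bounds for linear forms in elliptic logarithms (this handles the archimedean places, the main issue), these are $O(\log n)$ or at worst $o(n^2)$. Therefore
\[
\log N\mathfrak D_n = [K:\mathbb Q]\,\hat h(P)\,n^2 + O(\log n)\quad\text{(or }+o(n^2)\text{)}.
\]
Substituting into the inequality gives
\[
n^2\hat h(P)\Bigl(1-\sum_{q\mid n}q^{-2}\Bigr)\le O(\omega(n)\log n)+o(n^2).
\]
Since $\sum_q q^{-2}<0.46$, this fails for all large $n$, which proves the first assertion.

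\textbf{Main obstacle.} The hardest step is controlling the local height at archimedean places, i.e. ruling out $nP$ being extremely close to $O$ in $E(\mathbb C_v)$. Siegel's theorem alone gives only $o(n^2)$, which suffices for the first assertion but is ineffective. For the second assertion (uniformity in $P$) we need explicit bounds. The plan is to use David's theorem on elliptic logarithms together with Lang's conjectured or known lower bounds $\hat h(P)\gg$ (depending on $E,K$) and the finiteness of points of bounded height. Since $\hat h(P)\to\infty$ outside finitely many $P$, the threshold beyond which primitive divisors exist drops to $n\ge 1$ once $\hat h(P)$ is large enough relative to the effective constants. This settles the claim for all but finitely many $P$.
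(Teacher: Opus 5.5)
Your argument for the first assertion is fine. It is essentially the Silverman--Cheon--Hahn argument that the paper cites and transplants to function fields in the proof of Theorem~\ref{Elliptic curve}: formal-group control at good places, a Siegel-type bound at the places of $S$, and the comparison via $\sum_q q^{-2}<\tfrac12$.

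The gap is in the second assertion. It requires the $S$-contribution to be small \emph{uniformly over all of} $E(K)$. Concretely, for every $\epsilon>0$ you need a constant $C_\epsilon$, independent of $Q$, with $\sum_{v\in S}\hat\lambda_v(Q)\le\epsilon\,\hat h(Q)+C_\epsilon$ for all $Q\in E(K)$, at the archimedean \emph{and} the finite places of $S$. This is Siegel's theorem in the form recalled in Remark~\ref{Remark: Failure of Siegel's theorem}: $\lim_{h(Q)\to\infty}h_{x,v}(Q)/h_x(Q)=0$ over $Q\in E(K)$. Granting it, every multiple of $P$ has large height once $\hat h(P)$ is large, and your inequality fails for every $n\ge 1$ (for $n=1$ this is the finiteness of $S$-integral points). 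Effectivity is beside the point: the ineffective, Roth-based theorem suffices. No Lang-type lower bound is needed either, since for fixed $E/K$ Northcott already gives $\hat h(P)\to\infty$ off a finite set. The paper draws exactly this distinction between control along the multiples of one point (Lemma~\ref{Lemma: weak Siegel}) and control over all of $E(K)$ (Proposition~\ref{rem:local-global-fails}). In characteristic $p$ the former survives, while the latter fails, and with it the uniform assertion (Proposition~\ref{prop:failure-all-but-finitely-many}).

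The tools you name do not deliver this uniformity in the way you propose to use them.
\begin{itemize}
\item Applied to the linear form $nu_P+k\omega_1+l\omega_2$, David's lower bound has a constant carrying a factor that grows with $\hat h(P)$. The archimedean error is then of size $\hat h(P)\,f(n)$, the same order as the main term $n^2\hat h(P)$ for bounded $n$. This yields at best a threshold $n_0(E)$ independent of $P$, never $n_0=1$.
\item To get $P$-independent constants, write $Q=\sum_i a_iP_i+T$ in a fixed basis $P_1,\dots,P_r$ of $E(K)/E(K)_{\mathrm{tors}}$, with $T$ torsion. Apply David to the fixed logarithms of $P_1,\dots,P_r,T$. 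Positive definiteness of $\hat h$ gives $\max_i|a_i|\ll\hat h(Q)^{1/2}$, hence $\hat\lambda_v(Q)\ll(\log(\hat h(Q)+2))^{c}$ uniformly.
\item David's theorem is purely archimedean. At a finite $v\in S$, the formal group only relates $\hat\lambda_v(nP)$ to $\hat\lambda_v(mP)$ for the first multiple $mP$ reducing to $O$ at $v$, and that quantity depends on $P$. Bounding it by $o(\hat h(P))$ uniformly is the $v$-adic Siegel theorem, obtained via Ridout's $v$-adic Roth or lower bounds for $p$-adic elliptic logarithms.
\item The same applies to the good places where you allowed a ``bounded correction''. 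That bound is not uniform in $P$, so those places should be put into $S$ and treated the same way.
\end{itemize}
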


The proof of this result is not effective. In \cite{Ver23}, Verzobio established an effective bound for \(n\) building on the works of David \cite{Dav95} and Stange \cite{Sta16}.

\begin{remark}\label{remark: failure in higher dimensions}
    We can easily find that the above problem is not true in general by considering \(A\) a not geometrically simple abelian variety, say \(A=A_1 \times A_2\), and \(p=(p_1,p_2)\) where \(p_1\) is non-torsion in \(A_1\) and \(p_2\) is torsion in \(A_2\).
\end{remark}

These contributions lead to the following question.

\begin{question}\label{question1}
    Let \(G\) be an algebraic group over a number field \(K\). Given a non-torsion point \(P \in G(K)\), for a sufficiently large positive integer \(n\), whether (or when) there exists a place $v$ of $K$ such that $\mathrm{ord}_v(P)=n$.
\end{question}
To the best of our knowledge, the problem remains unresolved for higher-dimensional cases. A first step toward higher-dimensional abelian varieties is studied for a power of an elliptic curve in \cite{BNV23} by Bara\'nczuk-Naskr\k{e}ck-Verzobio.

We can describe the difficulty in higher-dimensional cases as follows. Roughly speaking, in the elliptic curve case, the condition that $\mathrm{red}_v(nP)$ is the identity is detected by a divisor (the zero section), hence by a denominator ideal. In higher dimensions, the condition that $\mathrm{red}_v(nP)$ is the identity is naturally encoded by the pullback of the ideal sheaf of the identity section (that is, integrality with respect to a point).

More precisely, let $A$ be an abelian variety over a number field $K$ and let $S$ be a finite set of places (containing all the archimedean ones and all the places of bad reduction). Let $\mathcal{A}/\mathcal{O}_{K,S}$ be the N\'eron model over the ring of $S-$integers $\mathcal{O}_{K,S}$  of $A/K$, and let
\[
e:\Spec(\mathcal{O}_{K,S})\longrightarrow \mathcal{A}
\]
    be the identity section.  Since $\mathcal{A}\to \Spec(\mathcal{O}_{K,S})$ is
    separated, $e$ is a closed immersion \cite[Tag~024T]{stacks-project}; write $\mathcal{I}_e\subset \mathcal{O}_{\mathcal{A}}$ for its ideal sheaf.

    For a $K$-point $P\in A(K)$, the N\'eron mapping property gives a unique section $\sigma_P:\Spec(\mathcal{O}_{K,S})\to \mathcal{A}$ extending $P$. We define the \emph{(scheme-theoretic) denominator ideal of $P$ relative to the origin} by
\[
\mathfrak{a}_S(P)\ :=\ \sigma_P^{\ast}(\mathcal{I}_e)\ \subset\ \mathcal{O}_{K,S}.
\]
Then for every non-archimedean place $v\notin S$, one has
\[
v\big(\mathfrak{a}_S(P)\big)>0
\quad\Longleftrightarrow\quad
\mathrm{red}_v(P)=e(v),
\]
i.e.\ $\mathrm{Supp}(\mathfrak{a}_S(P))$ is precisely the set of places where $P$
specializes to the identity section (equivalently, where the two sections
$\sigma_P$ and $e$ coincide on the special fibre).

We will say that $P$ is \emph{$S$-integral with respect to the origin} if
$\mathfrak{a}_S(P)=\mathcal{O}_{K,S}$, i.e., if $\mathrm{red}_v(P)\neq e(v)$ for all
$v\notin S$.

In the elliptic curve case ($\dim A=1$), the image $e(\Spec\mathcal{O}_{K,S})$
has codimension $1$ in $\mathcal{A}$; moreover $e$ is a regular immersion because it
is a section of a smooth morphism \cite[Tag~067R]{stacks-project}, hence
$e(\Spec\mathcal{O}_{K,S})$ is an effective Cartier divisor
\cite[Tag~01WQ]{stacks-project}.  Consequently $\mathcal{I}_e$ is invertible, and
$\mathfrak{a}_S(nP)$ recovers (up to the usual harmless $S$-factors) the familiar
denominator ideals/elliptic divisibility sequence.

By contrast, if $\dim A=g\ge 2$, then the origin on the generic fibre has
codimension $g\ge 2$ and so it is \emph{not} a (Weil) divisor (prime divisors are,
by definition, codimension~$1$) \cite[Tag~0BE0]{stacks-project}.  Thus the condition
``$nP$ reduces to the origin'' is naturally an \emph{integrality condition with
respect to a higher-codimension closed subscheme}, rather than a divisor condition. One can nevertheless attach local ``Weil functions'' and height functions to general ideal sheaves (equivalently, closed subschemes), extending
the classical divisor case; see e.g. \cite[\S2.2, Theorem~2.1]{Sil87} and \cite[\S2]{Voj12}.

Finally, the higher-codimension nature explains why this kind of integrality is typically much less restrictive than divisor-integrality: for instance, on a normal local domain $(R,\mathfrak{m})$ of dimension $\ge 2$, removing the closed point does not change global functions on the punctured spectrum (\emph{Hartogs phenomenon}) \cite[Tag~0BCS]{stacks-project}.
\subsection{Elliptic curves over global function fields}

In this note, we first prove Silverman's theorem for global function fields of positive characteristic. 

Throughout, $p$ is a prime number and $\mathbb{F}_q$ denotes the field of $q$ elements with $q$ is q power of $p$. Let $\mathcal{C}$ be a smooth projective geometrically irreducible curve over $\mathbb{F}_q$, and $K=\mathbb{F}_q(\mathcal{C})$ be the function field of $\mathcal{C}$. Then $K$ is called a \emph{global function field}. We denote by $M_K$ the set of \emph{places} (i.e., equivalence classes of valuations) of $K$, or equivalently, the set of closed points of $\mathcal{C}$. 

Let $G$ be an algebraic group over $K$. Using the spreading-out principle again, there is a small affine curve $\mathcal U\subset \mathcal{C}$ and an integral model group scheme $\mathcal{G}$ of $G$ over  $\mathcal{U}$. For a closed point $v\in \mathcal{U}$ (a place of $K$), we have the \emph{reduction homomorphism} at $v$, given by
$$\mathrm{red}_v:\mathcal G(\mathcal{U})\to \mathcal G(\kappa_v),  P\mapsto P\bmod v.$$
 For $P\in \mathcal G(\mathcal U)$, we also denote by $\mathrm{ord}_v(P)$ the order of $\mathrm{red}_v(P)$ in the finite group $\mathcal G(\kappa_v)$.
One can also ask Question \ref{question1} in this setting. When $G=E$ is an elliptic curve, we obtain the following result.
\begin{manualtheorem}{A}(= Theorem~\ref{Elliptic curve})\label{Theorem A}
    Let $E$ be an elliptic curve over some global function field $K$ of characteristic $p>3$ and let
$P \in E(K)$ be a non-torsion point. Then for every sufficiently large integer $n$ coprime to $p$, there exists a place $v$ of good reduction so that the order of $P$ mod $v$ is equal to $n$. 
\end{manualtheorem}

The problem considered in Theorem~\ref{Theorem A} is closely related to the divisibility sequence. The idea is as follows. Let $E/K$ be an elliptic curve, and let
$\pi:\mathcal{E}\to C$ be its N\'eron model with zero section $\mathcal{O}$.
For $P\in E(K)$, we write $\sigma_P:C\to\mathcal{E}$ for the corresponding section and define
\[
D_{nP}:=\sigma_{nP}^{\ast}(\mathcal{O})\in \mathrm{Div}(C)
\qquad (n\ge 1),
\]
as in the function-field theory of elliptic divisibility sequences (EDS) \cite{IMS12,Nas16,NS20}
(cf.\ also the valuation-theoretic denominator definition \cite[eq.\ (1.2)]{NS20}).
A place $v$ of $K$ is a \emph{primitive valuation} of $D_{nP}$ if $v\in\mathrm{Supp}(D_{nP})$ but
$v\notin\mathrm{Supp}(D_{mP})$ for every $1\le m<n$.
If $v$ is a place of \emph{good} reduction for $E$, then $v\in\mathrm{Supp}(D_{nP})$ if and only if
$\mathrm{red}_v(nP)= \mathcal{O}$ in $E(\kappa_v)$, and the minimal such $n$ is precisely the
order of the reduction $\mathrm{ord}_v(P)$.
Hence, for $v$ at which $E$ has good reduction, we have
\[
\mathrm{ord}_v(P)=n
\quad\Longleftrightarrow\quad
v \text{ is a primitive valuation of } D_{nP}.
\]
In particular, Theorem~\ref{Elliptic curve} may be read as an ``elliptic Zsigmondy'' statement
(over global function fields) for \emph{good} places and indices $\gcd(n,p)=1$.
This is the same equivalence between primitive divisors/valuations and exact reduction orders
highlighted explicitly in \cite[\S 1]{NS20}.


The existence of primitive valuations in EDS over function fields is well established in several regimes:
in characteristic $0$ it is proved in \cite[Theorem~1.7]{IMS12} (see also \cite[Theorem~5.5]{IMS12});
in positive characteristic $p\ge 5$ it is extended to \emph{ordinary} elliptic curves by Naskrk\k{e}cki,
with explicit bounds and a tame/wild dichotomy (including a formulation over finite constant fields in
\cite[Theorem~2.4]{Nas16}); and in the \emph{constant $j$-invariant} case with an additional condition that $(E,P)$ is non-constant, Naskrk\k{e}cki--Streng obtain sharp Zsigmondy bounds and a detailed classification of the supersingular obstruction in \cite[\S 1.1]{NS20}. Via the equivalence above, these results imply Silverman-type exact reduction order statements in many overlapping cases. However, we note that due to the additional condition on $(E,P)$ in the constant $j$-invariant case, the results of \cite{NS20} do not cover all cases of supersingular elliptic curves with an arbitrary non-torsion point $P$; \emph{a fortiori}, they do not cover Theorem~\ref{Theorem A}.

Our contribution is primarily \emph{methodological and expository}: we give a direct Silverman--Cheon--Hahn style argument over \emph{global function fields} (over a finite constant field), phrased from the outset in terms of reduction orders and using only the height decomposition together with a formal-group input away from the characteristic (hence the restriction $\gcd(n,p)=1$). This avoids the elliptic-surface intersection-theoretic framework and the tame/wild analysis central to \cite{Nas16} and does not require the constant-$j$ reductions and component-group bookkeeping used in \cite{NS20}. While we do not aim for explicit Zsigmondy bounds (as in \cite{Nas16,NS20}),
the resulting proof is short, self-contained, and tailored to the ``order modulo $v$'' formulation that
interfaces naturally with our later relative questions in Section~\ref{sec:unlikely}.
\begin{remark}\label{Remark: Failure of Siegel's theorem}
    The second claim in Theorem \ref{Theorem: Cheon-Hahn} is not true in general in positive characteristic. The reason is that the proof over number fields required the following theorem of Siegel: Let $E$ be an elliptic curve over a number field $K$, then for every place $v\in M_K$, we have $\lim \limits _{\substack{%
    P\in E(K)\\
    h(P)\to\infty }}\dfrac{h_{x,v}(P)}{h_x(P)}=0$ (we refer to \cite{CH99} and \cite[Chapter~IX, Theorem~3.1]{Sil86} for related notations). The proof of Siegel's Theorem crucially relies on Roth's Theorem. However, Roth's Theorem in positive characteristic does not hold,  see \cite{Osg75,BS76} for some counterexamples. In fact, we will provide explicit examples of elliptic curves over $\mathbb{F}_p(t)$ that do not satisfy both Siegel's Theorem (see Proposition \ref{rem:local-global-fails}) and the second assertion of Theorem \ref{Theorem: Cheon-Hahn} (see Proposition \ref{prop:failure-all-but-finitely-many}).    
\end{remark}

\subsection{A relative Silverman theorem} In the second part of this note, we reformulate our question in a different framework, namely, into a family of abelian varieties of characteristic \(0\) as follows.

\begin{question} \label{question2}
    Let \(A \rightarrow S\) be an abelian scheme over some variety \(S\) over an algebraically closed field \(L\) of characteristic \(0\), and let \(P: S \rightarrow A\) be a non-torsion section. When is it true that for every sufficiently large \(N\), there is a point \(s\) in \(S(L)\) such that \(P(s)\) is a point of order \(N\) in the corresponding fiber?
\end{question}

We can address this question by using the Betti map introduced by Andr\'e--Corvaja--Zannier \cite{ACZ20}.
\begin{manualtheorem}{B} (= Theorem~\ref{gen-sub})
    Let \(\pi: \mathcal{A} \rightarrow S\) be an abelian scheme over a smooth irreducible quasi-projective variety $S/\mathbb{C}$ of relative dimension $g\ge 1$, and let \(P\) be a non-torsion section of \(\pi\). If the Betti map associated to \(P\) is generically submersive (in the sense of \cite{ACZ20}), then for every sufficiently large \(N\), there is a point \(s\) in \(S(\mathbb C)\) such that \(P(s)\) is a point of order \(N\) in the corresponding fiber.
\end{manualtheorem}

The theorem applies in the concrete situations where generic submersivity of the Betti map is known, e.g.\ those established in \cite{ACZ20,Gao20} (see also the summary in \cite[\S 1.1]{GH23}). For example, we can take \(\mathcal A/S\) with \(\mathrm{End}(\mathcal A/S) = \mathbb{Z}\) and \(\dim S \geq g\) \cite[Theorem 6.1.1]{ACZ20}. Results of this nature for certain elliptic surfaces (i.e.\ $g=1$ over a curve base) appear implicitly in the work of Ulmer--Urz\'ua \cite{UU21}, and related density results are proved in \cite{CDMZ22}.

Let \(X:= P(S)\), we can transform the question \ref{question2} to the following question.

\begin{question}
    Let $\pi:\mathcal A\to S$ be an abelian scheme and let $P:S\to\mathcal A$ be a section. Set $X:=P(S)$. Is $X(\mathbb C)\cap\mathcal A_{\mathrm{tor}}$ Zariski dense in $X$?
\end{question}

Drawing on recent advancements in the field of unlikely intersections, we establish the following criteria. It is a corollary of the relative Manin--Mumford conjecture established by Gao--Habegger \cite[Theorem~1.3]{GH23}. 
\begin{corollary}(= Corollary \ref{cor:GH23-to-silverman})
  Let $\pi:\mathcal A\to S$ be an abelian scheme over $\mathbb C$, $P:S\to\mathcal A$ be a section, and set $X:=P(S)$. Assume that \(\mathbb{Z}X\) is Zariski dense in \(\mathcal{A}\). Then the question \ref{question2} holds if $X(\mathbb{C})\cap \mathcal A_{\mathrm{tor}}$ is Zariski dense in $X$.
\end{corollary}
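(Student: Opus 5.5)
Assume $X(\mathbb C)\cap\mathcal A_{\mathrm{tor}}$ is Zariski dense in $X$. The plan is to turn this density into a supply of torsion points of \emph{every} sufficiently large exact order $N$. The route is to reduce to the subvarieties $X\cap\mathcal A[N]$ and use the relative Manin--Mumford input of Gao--Habegger \cite[Theorem~1.3]{GH23} to control how the orders of torsion points on $X$ are distributed.

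First, reformulate. A point $s\in S(\mathbb C)$ with $P(s)$ of exact order $N$ is the same as a point of
\[
X_N:=X\cap\bigl(\mathcal A[N]\setminus\textstyle\bigcup_{d\mid N,\,d<N}\mathcal A[d]\bigr).
\]
So it suffices to show that $X_N\neq\emptyset$ for all large $N$. Now apply \cite[Theorem~1.3]{GH23} to $X$. Because $X$ is the image of a section and $\mathbb ZX$ is Zariski dense in $\mathcal A$, $X$ is non-degenerate in the sense of Gao--Habegger. Their theorem then shows that the torsion locus on $X$ behaves as expected: $X\cap\mathcal A[N]$ is nonempty of the expected dimension $\dim S-g$ whenever it is nonempty. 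More importantly, the density hypothesis combined with non-degeneracy gives that the Betti map of $P$ is generically submersive; this is the equivalence between Zariski density of torsion points and non-degeneracy established in \cite{GH23} together with \cite{ACZ20,Gao20}.

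Second, with generic submersivity in hand, pass to Theorem~\ref{gen-sub} (Theorem B). That theorem directly gives, for every sufficiently large $N$, a point $s\in S(\mathbb C)$ with $P(s)$ of order exactly $N$. This is precisely the conclusion of Question~\ref{question2}. If Theorem~\ref{gen-sub} is proved only later, the same argument can be run directly. Take a small open set $U$ on which the Betti map $\beta:U\to\mathbb T^{2g}$ is a submersion, so its image contains an open ball $B\subset\mathbb R^{2g}/\mathbb Z^{2g}$. The points of exact order $N$ in $\mathbb R^{2g}/\mathbb Z^{2g}$ are the images of $a/N$ with $\gcd(a_1,\dots,a_{2g},N)=1$. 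For $N$ larger than a constant depending on the radius of $B$, such a point lies in $B$: the primitive points of denominator $N$ form a $1/N$-dense set up to a bounded factor. Any preimage $s$ of such a point satisfies $P(s)$ of exact order $N$, because $\beta(s)$ records the coordinates of $P(s)$ in the period lattice of the fiber.

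The main obstacle is the first step: deducing generic submersivity (equivalently, the maximal rank $2g$ of the Betti map) from Zariski density of torsion points plus $\mathbb ZX$ dense. My first attempt would be a contrapositive argument via the Gao--Habegger degeneracy criterion. If the Betti rank were smaller than $2g$, then $X$ would be degenerate, contradicting the density of $\mathbb ZX$. One must check that the hypotheses of \cite[Theorem~1.3]{GH23}, such as passing to a finite cover of $S$ to get a fine moduli level structure, are harmless. A finite étale base change preserves both the orders of the points $P(s)$ and the existence of the required $s$, so this should go through.
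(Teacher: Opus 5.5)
Your skeleton is exactly the paper's proof: obtain $\mathrm{rank}_{\mathbb R}(db_\Delta|_X)=2g$ from \cite[Theorem~1.3]{GH23}, then apply Theorem~\ref{gen-sub}. The second half is fine. The problem is the first half, which you yourself flag as the main obstacle. The reasoning you give for it is wrong, and the argument you propose would not go through.

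First, non-degeneracy in the sense of Gao and Gao--Habegger means $\mathrm{rank}_{\mathbb R}(db|_X)=2\dim X$, not $2g$. For $X=P(S)$ with $\dim S>g$ the rank is at most $2g<2\dim X$, so $X$ is degenerate no matter what. Hence your claim that $X$ is non-degenerate because $\mathbb ZX$ is dense fails exactly in the range where the corollary has content; the two conditions coincide only when $\dim S=g$.

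Second, your contrapositive (rank $<2g$ $\Rightarrow$ $X$ degenerate $\Rightarrow$ contradiction with the density of $\mathbb ZX$) never uses the hypothesis that torsion points are Zariski dense in $X$. So it would prove that density of $\mathbb ZX$ alone forces rank $2g$, which is false. When $\dim S<g$ the rank is at most $2\dim S<2g$, yet $\mathbb ZX$ can be Zariski dense; this is the situation of Remark~\ref{rem:dimS}. Already $S$ a point, $\mathcal A=E$ an elliptic curve and $P$ a non-torsion point is an example. Likewise, the claim that $X\cap\mathcal A[N]$ has the expected dimension $\dim S-g$ is not something \cite{GH23} provides, and you never use it.

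What is actually needed is this implication: if $\mathbb ZX$ is Zariski dense in $\mathcal A$ and $X(\mathbb C)\cap\mathcal A_{\mathrm{tor}}$ is Zariski dense in $X$, then $\mathrm{rank}_{\mathbb R}(db|_X)=2g$. This is the deep direction of \cite[Theorem~1.3]{GH23}. It already contains the relative Manin--Mumford statement, since rank $2g$ forces $\dim X\ge g$. It cannot be recovered from a degeneracy criterion plus density of $\mathbb ZX$; the torsion hypothesis is indispensable. The paper quotes this equivalence as a black box and then applies Theorem~\ref{gen-sub}. Replacing your first step by that citation turns your proposal into the paper's proof.

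A smaller point concerns your optional direct argument. Points of exact order $N$ in $\mathbb T^{2g}$ are \emph{not} $C/N$-dense with $C$ independent of $N$: a Chinese remainder construction with $N$ having many prime factors yields boxes of side $C/N$ containing none. However, for a fixed open set and $N$ large, the counting in Lemma~\ref{lem:torsion-open} gives what you need, since it exploits that $N$ has only $O(\log N)$ prime factors.
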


\subsection*{Another reformulation}
We end the introduction with the following interpretation of the problem. Let \(A\) be an abelian variety defined over a number field \(K\) with the identity $\mathcal{O}$, and let \(P\) be a non-torsion point in \(A(K)\). We denote the ring of integers of \(K\) by \(\mathcal{O}_K\) and the Néron model for \(A/K\) by \(\mathcal{A}/\mathcal{O}_K\). Consider a finite set of places \(S\) in \(K\). For each \(n \geq 1\), we define the integral ideal \(C_n(\mathcal{A},P,S)\) in \(\mathcal{O}_K\) as 
\[
C_n(\mathcal{A},P,S) = \prod_{\substack{\mathfrak{p}\in\Spec\mathcal{O}_K \setminus S:\\nP \equiv \mathcal{O} \bmod \mathfrak{p}}} \mathfrak{p},
\]
where \(nP \equiv \mathcal{O} \bmod\mathfrak{p}\) signifies that \(nP\) reduces to the identity in \(\mathcal{A}(\kappa_\mathfrak{p})\). In the special case where \(A\) is an elliptic curve, \(P\) is a point on it, and \(\mathcal{A}\) is described by its Weierstrass equation, each term \(C_n(A,P,S)\) represents the product of prime ideals not in \(S\) that divide the \(n\)-th term of the corresponding elliptic divisibility sequence \(B_n(E,P)\). This sequence is determined by the denominators of the \(x\)-coordinates of the point \(nP\). Under this reformulation, Question \ref{question1} transforms into finding a primitive divisor of \(C_n(A,P,S)\) for all but finitely many \(n\). By \cite{Sil88,CH99}, we know an affirmative answer when $E$ is an elliptic curve. For divisibility sequences involving powers of elliptic curves, refer to \cite{BNV23}. For the geometric counterpart of this notion, consult \cite{BCZ22,BCT24}. 

\subsection*{Organization of the paper}
 In Section 2, we establish a version of Silverman’s theorem for elliptic curves over global function fields, leveraging height machinery and reductions of algebraic groups. Section 3 recalls recent progress in the realm of unlikely intersections, and we then set forth criteria for addressing the Silverman problem for a specific family of abelian varieties. We present a proof of Schinzel’s theorem over global function fields in the appendix.

\subsection*{Acknowledgements}
This article was conducted when the second author was a master's student at the Vietnam Institute of Mathematics. We extend our sincere gratitude to Professor Quoc-Thang Nguyen for his supervision and support throughout this project. Additionally, we deeply appreciate the invaluable guidance provided by Professors Umberto Zannier, Fabrizio Barroero, and Ziyang Gao. We thank Professor Yann Bugeaud for pointing out that Roth's theorem does not hold in positive characteristic. We are grateful to the anonymous referee for a careful reading and for a detailed, constructive report, including numerous valuable suggestions, corrections, and guidance that substantially improved the paper.

\section{The Silverman theorem for elliptic curves over global function fields}\label{Section2}
In this section, we prove a version of Silverman's theorem for elliptic curves over global function fields by using height machinery and reduction of elliptic curves. 

\subsection{Preparations}


Let $K=\mathbb F_q(\mathcal{C})$ be the function field of a smooth projective geometrically irreducible curve over $\mathbb F_q$ in characteristic $p>3$. We fix an algebraic closure $\overline{K}$.  For each $v\in M_K$, the valuation $v(x)$ is given by the order of vanishing of $x\in K$ at $v$. We write $\mathcal{O}_v$ for the local ring at $v$ (the ring
of rational functions on $\mathcal C$ regular at $v$), $\kappa_v$ for the residue field at $v$, and $\deg(v)=[\kappa_v:\mathbb F_q]$ for the degree of $v$. We also write $\mathrm{red}_v$ for the reduction homomorphism $\mathcal{O}_v\twoheadrightarrow  \kappa_v,x\mapsto x\bmod v$. 


For $x\in K^\times$, we have  $v(x)=0$ for all but finitely many $v\in M_K$, and the product formula reads as $\sum_{v\in M_K}v(x)\deg(v)=0$, cf. \cite[Chapter~3]{Lang-1983}. The \emph{logarithmic height} on $\mathbb{P}^n(K)$ is defined as 
$$h(\alpha_0:\ldots:\alpha_n)=\sum_{v\in M_K}-\min\{v(\alpha_0),\ldots,v(\alpha_n)\}\deg(v).$$

Let $E$ be an elliptic curve over $K$ with the identity element $\mathcal O$. We may and do assume that  $E$  is given by a Weierstrass equation $y^2=x^3+ax+b\text{ } (a,b\in K)$ that is \emph{minimal} outside some finite set of places $S$. In particular, $a$ and $b$ belong to $\mathcal{O}_v$ for all $v\not\in S$. For convenience, we write $\mathcal{O}$ for $\mathrm{red}_v(\mathcal{O})$ for all $v\in M_K$. We write $[n]:E\to E$ for the multiplication-by-$n$ morphism and write $nP$ for $[n](P)$.


 For $P=(x,y)\in E(K)$, following \cite[p.~38]{Zim76} we consider the following naive local distribution $h_v$:  $$ h_v(P):=\left\{
	\begin{array}{ll}
		-\min\{0,v(x),v(y)\} \deg(v) & \mbox{if } P \neq \mathcal O \\
		 0& \mbox{if } P=\mathcal O
	\end{array}
\right.$$
Then $h(P)=\sum_{v\in M_K}h_v(P)$ where we view $P=[x:y:1]$ as a point in $\mathbb P^3(K)$. 

For each rational function $f\in K(E)$,  the height relative to $f$ is the function $h_f:E({K})\to \mathbb R$ defined by $h_f(P)=h(f(P)).$ We define the  \emph{N\'eron--Tate height} $\hat{h}$ on $E({K})$ by the Tate limit: 
$$\hat{h}(P)=\lim_{N\to\infty}\dfrac{h_f(2^NP)}{4^N\deg f}$$ where $f$ is any non-constant even function. The existence of $\hat{h}$ and its independence of $f$ follow precisely \cite[Chapter~VIII, Proposition~9.1]{Sil86}, see also \cite[p.~39]{Zim76}. 
\begin{proposition}\label{Proposition: Neron-Tate height}The N\'eron--Tate height satisfies the following properties.
    \begin{enumerate}
        \item The Northcott property (for global function fields with finite constant field): for any real number $B$, the set 
        $$\{P\in E({K}):\hat{h}(P)<B\}$$ is finite.
        \item For any integer $m$ and $P\in E({K})$, $\hat{h}(mP)=m^2\hat{h}(P)$.
        \item A point $P\in E(K)$ is torsion if and only if $\hat{h}(P)=0$.
        \item (\cite[p.~40]{Zim76}) There exists a constant $c>0$ depending only on $E/K$ such that $|h(P)-\hat{h}(P)|<c$ for all $P\in E(K)$.
    \end{enumerate}
\end{proposition}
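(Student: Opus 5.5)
Four properties of the Néron--Tate height need proof. The plan is to obtain them all from the Tate-limit definition together with the basic height machinery for $h_f$, as in \cite[Chapter~VIII, \S9]{Sil86} and \cite[pp.~38--40]{Zim76}, checking that nothing in those arguments uses characteristic $0$ beyond what is available over a global function field with finite constant field. I will take $f=x$, the even function of degree $2$, so that $\hat h(P)=\lim_N \tfrac12 4^{-N}h_x(2^NP)$.

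\emph{Step 1: the parallelogram law up to $O(1)$.} First I show $h_x(P+Q)+h_x(P-Q)=2h_x(P)+2h_x(Q)+O(1)$. The standard proof writes $x(P+Q)+x(P-Q)$ and $x(P+Q)x(P-Q)$ as explicit quadratic forms in $1,\ x(P)+x(Q),\ x(P)x(Q)$. The map $\mathbb P^2\to\mathbb P^2$ relating $[1:x_P+x_Q:x_Px_Q]$ to $[1:x_{P+Q}+x_{P-Q}:x_{P+Q}x_{P-Q}]$ is a morphism of degree $2$. The key input is that a morphism of degree $d$ between projective spaces satisfies $h(\phi(\alpha))=d\,h(\alpha)+O(1)$. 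Its proof over $K$ uses the Nullstellensatz and the ultrametric inequality place by place. Only finitely many places contribute to the error, namely those where the coefficients are non-integral or the resultant is non-unit, so the argument carries over verbatim with the product formula $\sum v(x)\deg v=0$. Finally, $h(x_P+x_Q,x_Px_Q)=h(x_P)+h(x_Q)+O(1)$ follows from the comparison between the height of a polynomial and that of its roots. This is Gauss's lemma at each non-archimedean place, and over $K$ all places are non-archimedean, so there is no error term at all.

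\emph{Step 2: existence of $\hat h$ and properties (2) and (4).} Taking $Q=P$ gives $h_x(2P)=4h_x(P)+O(1)$. The usual Tate telescoping then shows that $4^{-N}h_x(2^NP)$ is Cauchy, that $|\tfrac12h_x-\hat h|$ is bounded, and that $\hat h$ satisfies the exact parallelogram law. Since $h(P)=h([x:y:1])$ and $h_x(P)$ differ by a bounded amount from a suitable multiple, namely $h(P)=\tfrac12 h_x(P)\cdot$(normalization)$+O(1)$ using $y^2=x^3+ax+b$, property (4) follows. One must choose the normalization consistently; I expect this bookkeeping with the naive height $h$ in the statement to be the one delicate point. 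Property (2) follows from the exact parallelogram law by induction on $m$, giving $\hat h(mP)=m^2\hat h(P)$.

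\emph{Step 3: Northcott and torsion, properties (1) and (3).} For (1), by (4) it suffices to show that $\{\alpha\in\mathbb P^1(K):h(\alpha)<B\}$ is finite. A point $[\alpha_0:\alpha_1]$ of height less than $B$ corresponds to a global section of a line bundle of degree at most $B$ on $\mathcal C$, that is, to an element of a Riemann--Roch space $L(D)$ with $\deg D\le B$. There are finitely many such effective $D$, because $\mathbb F_q$ is finite and there are finitely many closed points of bounded degree. Each $L(D)$ is a finite-dimensional $\mathbb F_q$-space and hence finite. Since each value of $x$ has at most two preimages, (1) follows. This is exactly where the finiteness of the constant field is essential, and it is the main obstacle conceptually. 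For (3), if $P$ is torsion then $\{mP\}$ is finite, so $m^2\hat h(P)$ is bounded and $\hat h(P)=0$. Conversely, if $\hat h(P)=0$ then $\hat h(mP)=0$ for all $m$, so $\{mP\}$ is finite by (1) and $P$ is torsion.
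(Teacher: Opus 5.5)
Your Steps 1--3 are the standard Tate-limit argument of \cite[Chapter~VIII, \S9]{Sil86}, which the paper only cites, and they do establish (1), (2) and (3), together with $|\hat h-\tfrac12 h_x|<c'$. The gap is the ``normalization bookkeeping'' you deferred in Step~2. There is no normalization left to choose. Both $h$ (the height of $[x:y:1]$) and $\hat h$ (the Tate limit divided by $\deg f$) are fixed by the paper's definitions, and with these the comparison constant is $3$, not $1$.

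To see this, take a place $v$ with $a,b\in\mathcal O_v$. There $v(x)<0$ forces $2v(y)=3v(x)$, so $-\min\{0,v(x),v(y)\}=-\tfrac32\min\{0,v(x)\}$; this is the remark following Lemma~\ref{detecting torsion points}. At the finitely many remaining places the same equality holds once $v(x)$ is sufficiently negative, and otherwise both sides are bounded. Hence
\[
h(P)=\tfrac32\,h_x(P)+O(1)=3\,\hat h(P)+O(1).
\]
For a non-torsion $P$ this gives $h(mP)-\hat h(mP)=2m^2\hat h(P)+O(1)\to\infty$. So property (4) as literally stated is false, and no argument can deduce it.

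What your argument actually proves, and what should be stated, is $|h(P)-3\hat h(P)|<c$. Equivalently, (4) holds verbatim for the renormalized height $\lim_N 4^{-N}h(2^NP)$, which is the canonical height attached to $3(\mathcal O)$ rather than to $(\mathcal O)$.

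Nothing else is affected. Your proof of (1) only needs $\hat h=\tfrac12 h_x+O(1)$, or the corrected (4) together with $h_x\le h$. The only place the paper uses (4) is the proof of Theorem~\ref{Elliptic curve}, and that argument is homogeneous in $\hat h$, so the factor $3$ cancels there.
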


To prove Theorem~\ref{Theorem A}, we adapt the strategy used in the proof of the main theorem of \cite{CH99}. The following lemma is standard; we present a proof here for the reader's convenience. More general results, with $n$ not necessarily coprime to $p$, can be found in \cite[Lemma~8.2]{Nas16} (for ordinary elliptic curves) and in \cite{Naskrecki-Verzobio-2025} for general versions that encompass both good and bad reduction places.
\begin{lemma}\label{detecting torsion points}
Let $v\in M_K\setminus S$. Let $P$ be a non-torsion point of $E(K)$. Then 
\begin{enumerate}
    \item  If $\mathrm{red}_v(P)\neq\mathcal{O}$, we have $h_v(P)=0$.
    \item If $\mathrm{red}_v(P)=\mathcal{O}$, we have $$h_v(nP)=h_v(P)>0$$  for any positive integer $n$ coprime to $p$. 
\end{enumerate} 
\end{lemma}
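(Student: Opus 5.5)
Both parts come from the reduction map on a minimal Weierstrass model at \(v\notin S\) together with the formal group of \(E\) at \(v\).

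\emph{Part (1).} Since \(v\notin S\), the equation \(y^2=x^3+ax+b\) is minimal at \(v\) with \(a,b\in\mathcal{O}_v\). The reduction of \(P=(x,y)\) is computed by scaling the projective coordinates \([x:y:1]\) to be \(v\)-integral and primitive. If \(\min\{v(x),v(y)\}<0\), the Weierstrass equation forces \(v(x)=-2k\) and \(v(y)=-3k\) with \(k>0\). Scaling by \(y^{-1}\) gives \([x/y:1:1/y]\), which reduces to \([0:1:0]=\mathcal{O}\). Hence \(\mathrm{red}_v(P)\neq\mathcal{O}\) forces \(v(x),v(y)\ge 0\), so \(h_v(P)=0\).

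\emph{Part (2), setup.} By the same computation, \(\mathrm{red}_v(Q)=\mathcal{O}\) holds exactly when \(Q=\mathcal{O}\) or \(v(x(Q))<0\). Moreover, for \(Q\neq\mathcal{O}\) with \(v(x(Q))=-2k\),
\[
h_v(Q)=3k\deg(v)=\tfrac{3}{2}\,\bigl(-v(x(Q))\bigr)\deg(v).
\]
Equivalently, with \(z=-x/y\) the formal parameter, \(v(z(Q))=k\) and \(h_v(Q)=3\,v(z(Q))\deg(v)\). So if \(\mathrm{red}_v(P)=\mathcal{O}\), then \(P\) lies in the kernel of reduction \(E_1(K_v)\) and \(h_v(P)>0\). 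Since \(P\) is non-torsion, \(nP\neq\mathcal{O}\) for every \(n\), and \(nP\in E_1\) because reduction is a homomorphism.

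\emph{Part (2), key step.} It remains to show \(v(z(nP))=v(z(P))\) for \(p\nmid n\). Use the formal group \(\hat{E}\) over \(\mathcal{O}_v\), which exists since the model is minimal, hence integral, at \(v\). The isomorphism \(E_1(K_v)\cong\hat{E}(\mathfrak{m}_v)\) is given by \(Q\mapsto z(Q)\). Multiplication by \(n\) on \(\hat{E}\) is a power series
\[
[n](T)=nT+(\text{higher order terms})\in \mathcal{O}_v[[T]].
\]
Since \(n\) is prime to \(p\), it is a unit in \(\mathcal{O}_v\), whose residue field has characteristic \(p\). Therefore \(v([n](z))=v(z)\) whenever \(v(z)>0\). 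This gives \(v(z(nP))=v(z(P))\), and hence \(h_v(nP)=h_v(P)\).

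\emph{Expected obstacle.} The main difficulty is this formal-group step. One must justify that \(\hat{E}\) has \(\mathcal{O}_v\)-integral coefficients, via minimality at \(v\), and that \(z\) identifies \(E_1(K_v)\) with \(\hat{E}(\mathfrak{m}_v)\) compatibly with the group laws; both follow as in Silverman, Chapter IV and Proposition VII.2.2. Working over the completion \(K_v\) is harmless, since \(h_v\) depends only on \(v\)-adic valuations.
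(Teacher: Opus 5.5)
Your proposal is correct and follows essentially the same route as the paper: for part (1), a valuation analysis of the $v$-integral Weierstrass equation; for part (2), the identification $E_1(K_v)\cong\hat{E}(\mathfrak{m}_v)$ via $z=-x/y$, combined with $[n](T)=nT+O(T^2)$ and $v(n)=0$. Your contrapositive argument for part (1) is slightly cleaner than the paper's case analysis on $[X:Y:Z]$, and you state explicitly that $nP\neq\mathcal{O}$ because $P$ is non-torsion, which the paper uses only implicitly; the underlying argument is the same.
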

\begin{proof}
We write $P=(x,y)$ and $P=[X:Y:Z]$ in the corresponding projective closure of $E$ in $\mathbb P^2$, where (after scaling) $X,Y,Z\in \mathcal O_v$.
The condition  $\mathrm{red}_v(P)=\mathcal{O}$ means that $v(X)>v(Y)$, $v(Z)>v(Y)$, and whence $v(y)<0.$ Therefore, the condition $\mathrm{red}_v(P)\neq\mathcal{O}$ is equivalent to either $v(X)\leq v(Y)$ or $v(Z)\leq v(Y)$. If $v(X)\leq v(Y)$ and $v(Y)<v(Z)$, then $v(X)<v(Z)$.  But then from the homogeneous Weierstrass equation $Y^2Z=X^3+aXZ^2+bZ^3$ we obtain $$2v(Y)+v(Z)=3 v(X),$$a contradiction.  If $v(Z)\leq v(Y)$, then $v(y)\geq0$, and from $y^2=x^3+ax+b$ we obtain $v(x)\geq 0$ (since if $v(x)<0$, then $2v(y)=v(x^3+ax+b)=3v(x)<0$, a contradiction), i.e., $h_v(P)=0$ as wanted. 

For the second statement, since the condition $\mathrm{red}_v(P)=\mathcal{O}$ yields that $v(y)<0$, we deduce from $y^2=x^3+ax+b$ that $v(x)<0$ and  $3v(x)=2v(y),$ whence $h_v(P)=-v(y)>0.$ We denote $E_1(K_v):=\{M\in E(K_v):P\bmod \mathfrak{m}_v=\mathcal{O}\}, $
then we have an isomorphism of groups (see \cite[Chapter~VII, Proposition~2.2]{Sil86})
$$E_1(K_v)\to \hat{E}(\mathfrak m_v) ,\text{ }M=(x(M),y(M))\mapsto z(M) $$ where we write $K_v$ for the completion of $K$ at $v$, $\mathcal{O}_{K_v}$ for the ring of integers of $K_v$, $\mathfrak m_v$ for the maximal ideal of $\mathcal{O}_{K_v}$,  $\hat{E}$ for the formal group over $\mathcal{O}_{K_v}$ associated to $E$ at $v$, and $z=-x/y$ for a parameter at $\mathcal{O}$. This isomorphism gives us $v(y(M))=-3v(z(M))$ for all $M\in E_1(K_v)$. Via this isomorphism, $nP$ maps to $[n](z)=nz+O(z^2)\in\mathcal{O}_{K_v}\llbracket z \rrbracket$ where $[n]$ denotes the multiplication-by-$n$ on $\hat{E}$. Since $v(z)=v(x)-v(y)>0$ and $v(n)=0$, $$ v(z(nP))=v\Big([n]z(P)\Big)\Big)=v\Big(z(P)\Big)=v(x)-v(y).$$ Consequently, we get $$v(y(nP))=-3v(z(nP))=3v(y)-3v(x)=3v(y)-2v(y)=v(y)<0.$$ So $3v(x(nP))=2v(y(nP))<0$, and $h_v(nP)=-v(y(nP))$, hence $h_v(nP)=h_v(P)>0$.
\end{proof}
\begin{remark}
    It follows from the proof of Lemma \ref{detecting torsion points} that for all $v\not\in S$ and  for all $P\in E(K)$,  $h_v(P)=\frac{-3}{2}\min\{0,v(x(P))\}\deg(v)$. Therefore, for all but finitely many $v$, the function $h_v$ coincides, up to a multiplicative constant, with the standard local height function $\frac{-1}{2}\min\{0, v(x)\}$ in \cite[Chapter~VI, \S2]{Sil94}.
\end{remark}
For general $v$, although Lemma \ref{detecting torsion points} does not hold, we still have the following boundedness result for $h_v(nP)$. 
\begin{lemma}\label{Lemma: weak Siegel} 
Let $E/K$ be an elliptic curve over a global function field of characteristic $p\neq 2$ and 3, and let $P\in E(K)$ be a non-torsion point.  Then for any place $v\in M_K$, we  have
 $h_v(nP)$ is bounded above when $n$ ranges over integers coprime to $p$. In particular, we have
$$\lim\limits _{\substack{%
  \gcd(n,p)=1  \\n\to\infty
     }}\dfrac{h_v(nP)}{h(nP)}=0.$$
\end{lemma}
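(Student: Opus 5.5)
Fix a place $v$. If $v\notin S$, Lemma~\ref{detecting torsion points} already gives the claim: $h_v(nP)$ equals $0$ if $\mathrm{red}_v(nP)\neq\mathcal O$, and otherwise equals $h_v(m P)$ for a suitable multiple. More simply, by the Remark after that lemma, $h_v$ depends only on $v(x)$. Either $\mathrm{red}_v(P)=\mathcal O$ and then $h_v(nP)=h_v(P)$ for all $n$ prime to $p$, or $nP\in E_1(K_v)$ for some $n$ prime to $p$, and then the formal group argument bounds everything. So the real content is the finitely many $v\in S$, where the Weierstrass equation need not be integral. I would handle all $v$ uniformly by working over the completion $K_v$ with the $v$-adic formal group.

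The plan at a single place $v$ is as follows. Choose a $v$-minimal (or merely $v$-integral) Weierstrass model $y'^2=x'^3+a'x'+b'$ over $\mathcal O_{K_v}$, obtained from the original by a change of variables $x=u^2x'$, $y=u^3y'$ with $u\in K_v^\times$. Then $v(x)=v(x')+2v(u)$ and $v(y)=v(y')+3v(u)$. Hence $h_v(Q)$ and the analogous quantity $h'_v(Q)$ for the integral model differ by at most a constant $c_v$ depending only on $v(u)$, uniformly in $Q\in E(K_v)$. It therefore suffices to bound $h'_v(nP)$.

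Consider the filtration $E(K_v)\supset E_0(K_v)\supset E_1(K_v)$. The index $[E(K_v):E_0(K_v)]$ is finite (Tamagawa/Kodaira–Néron), and $E_0(K_v)/E_1(K_v)$ injects into the finite group $\tilde E_{ns}(\kappa_v)$. Hence there is an integer $m\ge1$ with $mE(K_v)\subset E_1(K_v)$. Write $m=p^a m_0$ with $\gcd(m_0,p)=1$.

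Now take $n$ coprime to $p$ and split into residues: $n=km_0+r$ with $0\le r<m_0$. If $r=0$, then $nP=k(m_0P)$. The issue is that $m_0P$ need not lie in $E_1$, since only $mP$ does. To fix this, I would use the structure of $E_0(K_v)/E_1(K_v)\cong\tilde E_{ns}(\kappa_v)$. This group is finite, of the form $\kappa_v^+$, $\kappa_v^\times$, a twisted torus, or $\tilde E(\kappa_v)$. Its $p$-part cannot be removed by prime-to-$p$ multiples, so I would pass instead to the finite quotient $G=E(K_v)/E_1(K_v)$.

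For $n$ prime to $p$, split $n$ by its class modulo the order $N=|G|$. If $nP\notin E_1(K_v)$, then $nP$ reduces to a point $\neq\mathcal O$ on the integral model, and the argument of Lemma~\ref{detecting torsion points}(1) gives $h'_v(nP)=0$. In the singular-fibre case one argues directly: $v(y')\ge0$ forces $v(x')\ge0$. If $nP\in E_1(K_v)$, let $d$ be the order of $P$ in $G$, so $d\mid n$ and $\gcd(d,p)=1$. Then $nP=(n/d)(dP)$ with $dP\in E_1(K_v)=\hat E(\mathfrak m_v)$ and $\gcd(n/d,p)=1$. The formal-group computation from Lemma~\ref{detecting torsion points}(2) gives $v(z(nP))=v(z(dP))$, hence $h'_v(nP)=h'_v(dP)$, a single fixed value. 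Thus $h_v(nP)\le c_v+\max\{0,h'_v(dP)\}$ is bounded. Note that $dP\ne\mathcal O$ since $P$ is non-torsion, so $z(dP)\neq0$.

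For the limit, Proposition~\ref{Proposition: Neron-Tate height}(2),(3),(4) gives $h(nP)\ge n^2\hat h(P)-c\to\infty$, since $\hat h(P)>0$. Dividing the bounded numerator by this gives ratio $\to0$.

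The main obstacle is the bad places. There one must ensure the integral model's reduction argument still works when the reduction is singular, and handle the $p$-part of the component group. This is exactly why the restriction $\gcd(n,p)=1$ is used: it guarantees $d$ is prime to $p$ whenever $nP\in E_1$.
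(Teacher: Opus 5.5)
Your proof is correct and is essentially the paper's argument: after rescaling to a model that is integral at $v$ (the paper takes it minimal), which changes $h_v$ only by a bounded amount, Lemma~\ref{detecting torsion points} shows that for $n$ prime to $p$ the rescaled local height of $nP$ is either $0$ or one fixed positive number. The paper obtains this common value via $h_v(nP)=h_v(nmP)=h_v(mP)$, and you obtain it via $nP=(n/d)(dP)$; your detour through $E_0(K_v)$, Tamagawa numbers and the finiteness of $E(K_v)/E_1(K_v)$ is harmless but unnecessary, since $\{n\in\mathbb{Z} : nP\in E_1(K_v)\}$ is automatically a subgroup $d\mathbb{Z}$ of $\mathbb{Z}$.
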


\begin{proof}
Since the goal is to obtain the boundedness, replacing $x$ by $u^2x$ for some $u\in K^\times$, we may assume that the given Weierstrass equation is minimal at $v$. The desired boundedness then follows from the following observation: if there are $n\neq m$ and $n,m$ coprime to $p$ such that both $nP$ and $mP$ mod $\mathfrak p_v$ equal $\mathcal{O}$, then 
$h_v(nP)>0$, $h_v(mP)>0$ and $h_v(nP)=h_v(nmP)=h_v(mP)$ by Lemma \ref{detecting torsion points}. 


Since $h(nP)$ tends to $\infty$ as $n$ tends to $\infty$ (thanks to the Northcott property), the second claim follows.
\end{proof}
\begin{remark} We will see in Proposition \ref{rem:local-global-fails} that the limit $\displaystyle\lim_{n\to\infty} \dfrac{h_v(nP)}{h(nP)}$ might not exist and, in particular, need not equal 0 in general.
\end{remark}
\subsection{Proof of Theorem \ref{Theorem A}}
\begin{theorem}\label{Elliptic curve}
Let $E$ be an elliptic curve over a global function field $K$ of characteristic $p>3$
and let $P \in E(K)$ be a non-torsion point. Then for every sufficiently large integer $n$ coprime to $p$, there exists a place $v$ of good reduction such that the order of $P$ mod $v$ is $n$. 
\end{theorem}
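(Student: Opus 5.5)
We adapt the Cheon--Hahn argument, arguing by contradiction. Fix $n$ coprime to $p$ and suppose that no place $v\notin S$ has $\mathrm{ord}_v(P)=n$. Consider a place $v\notin S$ with $h_v(nP)>0$. By Lemma~\ref{detecting torsion points}(1) we have $\mathrm{red}_v(nP)=\mathcal O$, so $m:=\mathrm{ord}_v(P)$ divides $n$. By hypothesis $m\neq n$, so $m=n/\ell$ for some prime $\ell\mid n$, possibly after passing to a larger proper divisor. Since $\gcd(n,p)=1$, the divisor $m$ is also coprime to $p$, and Lemma~\ref{detecting torsion points}(2) applied to the point $mP$ gives
\[
h_v(nP)=h_v\bigl((n/m)\cdot mP\bigr)=h_v(mP)\le h_v\bigl((n/\ell)P\bigr),
\]
where the last step follows because $mP$ reducing to $\mathcal O$ gives $h_v((n/\ell)P)=h_v(mP)$ by the same lemma. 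This yields the local bound $h_v(nP)\le \sum_{\ell\mid n} h_v((n/\ell)P)$ for every $v\notin S$.

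Summing over all places, we split $h(nP)=\sum_{v\notin S}h_v(nP)+\sum_{v\in S}h_v(nP)$. For the finitely many $v\in S$, Lemma~\ref{Lemma: weak Siegel} shows that $h_v(nP)$ is bounded by a constant $C_S$ independent of $n$ (for $n$ coprime to $p$). For $v\notin S$ the local bound applies, and since $h_v\ge 0$ we obtain
\[
h(nP)\le \sum_{\ell\mid n,\ \ell \text{ prime}} h\bigl((n/\ell)P\bigr)+C_S .
\]

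Next we apply Proposition~\ref{Proposition: Neron-Tate height}(4) and (2) to replace $h$ by $\hat h$ at the cost of $c(\omega(n)+1)$, where $\omega(n)$ is the number of prime divisors of $n$:
\[
n^2\hat h(P)\le \hat h(P)\sum_{\ell\mid n}\frac{n^2}{\ell^2}+c\,(\omega(n)+1)+C_S .
\]
Since $\hat h(P)>0$ by Proposition~\ref{Proposition: Neron-Tate height}(3), dividing by $n^2\hat h(P)$ gives
\[
1-\sum_{\ell\mid n}\ell^{-2}\le \frac{c(\omega(n)+1)+C_S}{n^2\hat h(P)}.
\]
The left side is at least $1-\sum_{\ell \text{ prime}}\ell^{-2}>1-0.46>1/2$, while the right side is $O(\log n/n^2)\to 0$. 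This is a contradiction for all $n$ large enough.

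It remains to check that the place found has good reduction and that $\mathrm{ord}_v(P)$ is well defined. The argument produces a place $v\notin S$ with $\mathrm{ord}_v(P)=n$, and we enlarge $S$ at the outset to contain the bad places, which costs only a larger $C_S$. The main point needing care is the uniform bound for $v\in S$. Lemma~\ref{Lemma: weak Siegel} bounds each $h_v(nP)$ separately, which suffices because $S$ is finite; but the equation must be minimal at $v$ there, and changing the model alters $h_v$ only by a bounded amount, so the bound survives. A second point is that the local bound requires $n/\ell$ to be coprime to $p$, which holds automatically since $\gcd(n,p)=1$.
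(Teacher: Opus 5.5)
Your proposal is correct and follows essentially the paper's argument. It uses the local bound from Lemma~\ref{detecting torsion points} at $v\notin S$ and control of $v\in S$ via Lemma~\ref{Lemma: weak Siegel}, then compares with $\hat h$ and uses $\sum_{\ell\mid n}\ell^{-2}<1/2$. The only differences are cosmetic: you feed in the boundedness of $h_v(nP)$ for $v\in S$ directly as an additive constant $C_S$ rather than the paper's $\epsilon\, h(nP)$ form, and you derive the contradiction for each fixed large $n$, which is actually the logically cleaner way to phrase it.
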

\begin{proof}
By enlarging $S$ if necessary, we may assume that $E$ has good reduction outside $S$. We denote by $\# S$ the cardinality of the finite set $S$. 

Assume that for all sufficiently large integers $n>1$ with $\gcd(n,p)=1$, there is no place $v\notin S$ of good reduction such that $\mathrm{ord}_v(P)=n$. In other words, if $nP$ mod $v$ equals $\mathcal O$, then there exists some prime divisor $r$ of $n$ such that $\frac{n}{r}P$  mod $v$ equals $\mathcal O$, whence Lemma \ref{detecting torsion points} gives us $$h_v(nP)=h_v\Big(\dfrac{n}{r}P\Big).$$
    It follows that for $v\in M_{K}\setminus S$, we have \begin{equation}\label{1}
        h_v(nP)\leq \sum_{r}h_v\Big(\dfrac{n}{r}P\Big) ,
    \end{equation}
    where $r$ runs over the set of prime divisors of $n$. For $v\in S$, Lemma \ref{Lemma: weak Siegel} yields $$\lim_{\gcd(n,p)=1,n\to\infty}\dfrac{h_v(nP)}{h(nP)}=0.$$
    Since $\# S$ is finite, it follows that for any $\epsilon>0$,  
    \begin{equation}\label{2}
        h_v(nP)\leq\epsilon h(nP) 
    \end{equation}
    for all sufficiently large integers $n$. Combining the inequalities \eqref{1} and \eqref{2}, we get
    $$h(nP)=\sum_{v}h_v(nP) \leq\sum_{v\not\in S}\sum_{r|n}h_v\Big(\dfrac{n}{r}P\Big)+\sum_{v\in S}\epsilon. h(nP)\leq \sum_{r|n}h\Big(\dfrac{n}{r}P\Big)+\# S.\epsilon. h(nP).$$
    So \begin{equation}\label{eq1}
        (1-\# S.\epsilon)h(nP)\leq \sum_{r|n}h\Big(\dfrac{n}{r}P\Big). 
    \end{equation} 
We combine Proposition \ref{Proposition: Neron-Tate height}.(3) with \eqref{eq1} to deduce that
 $$(1-\# S.\epsilon)(\hat{h}(nP)-c) <\sum_{r|n}\hat{h}\Big(\dfrac{n}{r}P\Big)+c.n $$
 since $\#\{\text{prime divisors of }n\}< n$. Because of the quadraticity of $\hat{h}$, it follows that 
 $$ (1-\# S.\epsilon)(n^2.\hat{h}(P)-c)<\sum_{r|n}\dfrac{n^2}{r^2}\hat{h}(P)+c.n<\dfrac{n^2}{2}\hat{h}(P)+cn$$ since  
 \[
\sum_{r\mid n}\frac1{r^2}\ \le\ \frac14+\sum_{m\ge1}\frac1{(2m+1)^2}
\ =\ \frac14+\Bigl(\frac{\pi^2}{8}-1\Bigr)\ =\ \frac{\pi^2}{8}-\frac34\ <\ \frac12.
\]
 
 Therefore $$\Big(\dfrac{1}{2}-\# S.\epsilon\Big)n^2.\hat{h}(P)<(n+1-\# S.\epsilon).c $$ We choose $\epsilon<\frac{1}{2\# S}$ and let $n$ tend to $\infty$, we obtain $\hat{h}(P)=0$, a contradiction. 
 
\end{proof}
\begin{remark}
We note that the condition $\mathrm{gcd}(n,p)=1$ is necessary. Indeed, recall that an elliptic curve $E$ in characteristic $p>0$ is \emph{supersingular}, or that $E$ has \emph{Hasse invariant 0}, if $E[p]=0$, see \cite[Chapter~V, Theorem~3.1]{Sil86}. Otherwise, $E$ is said to be \emph{ordinary} or to have \emph{Hasse invariant} 1. Moreover, if $E$ is given by a homogeneous equation $f(X,Y,Z)=0$, then the Hasse invariant is 0 if and only if the coefficient of $(XYZ)^{p-1}$ in $f^{(p-1)}$ is 0, see \cite[Chapter~IV, Proposition~4.21]{Hartshorne-1977}. Now,  we consider a supersingular elliptic curve $E$ over $K$ with homogeneous equation $f=0$. For all but finitely many $v\in M_K$, the reduction of $E$ at $v$ is an elliptic curve $E_{v}$ over $\kappa_v$ given by $\mathrm{red}_v(f)=0$ where $\mathrm{red}_v(f)$ is the homogeneous polynomial in $\kappa_v[X,Y,Z]$ whose coefficients are the reduction of those of $f$ via $\mathrm{red}_v$. Thus, the coefficient of $(XYZ)^{p-1}$ in $\mathrm{red}_v(f)^{p-1}$ is also equal to 0, so $E_{v}$ is supersingular. It follows that for such $v$, $E_v[n]=E_v[np]$ for all integers $n$. Therefore, for all but finitely many $v$, the order $\mathrm{ord}_v(P)$ must be coprime to $p$. 
\end{remark}
When $E$ is ordinary, we have the following corollary. 
\begin{corollary}\label{ordinary elliptic curve}
    Let $E$ be an ordinary elliptic curve over some global function field $K$ of characteristic $p>3$ and let $P \in E(K)$ be a non-torsion point. Let $r$ be a positive integer. Then there exists $N(r)$ depending on $r$ such that for every $n>N(r)$ coprime to $p$, there exists a place $v$ of good reduction such that the order of $P$ mod $v$ is $np^r$.  
\end{corollary}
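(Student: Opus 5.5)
The plan is to rerun the proof of Theorem~\ref{Elliptic curve} with the index $m:=np^r$ in place of $n$. The only new ingredient is a replacement for Lemma~\ref{detecting torsion points}(2) for multiplication by $p$, valid at places of good \emph{ordinary} reduction.

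\emph{Step 1: choosing $S$.} Since $E$ is ordinary, its Hasse invariant is a nonzero element of $K$ (up to the usual normalisation). Hence only finitely many places $v$ have supersingular reduction. I enlarge $S$ to contain these places, together with the bad places and the places where the equation is not minimal.

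\emph{Step 2: the local formula at $p$.} For $v\notin S$, the formal group $\hat E$ over $\mathcal O_{K_v}$ has height $1$. So $[p](z)=g(z^p)$ with $g(0)=0$ and $g'(0)$ a unit; this comes from $[p]=V\circ F$ with $V$ étale on the reduction. Hence for $Q\in E_1(K_v)$ we get $v(z(pQ))=p\,v(z(Q))$. Since $h_v(Q)=-v(y(Q))\deg(v)=3v(z(Q))\deg(v)$ (from the proof of Lemma~\ref{detecting torsion points}), this gives
\[
h_v(pQ)=p\,h_v(Q)\qquad\text{whenever }\mathrm{red}_v(Q)=\mathcal O .
\]
I expect this to be the main point requiring care. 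One must check that the height-one description of $[p]$ holds integrally over $\mathcal O_{K_v}$, and not just on the special fibre, at every ordinary good place.

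\emph{Step 3: the local inequality at good places.} Suppose, for contradiction, that no $v\notin S$ has $\mathrm{ord}_v(P)=m$. Take $v\notin S$ with $\mathrm{red}_v(mP)=\mathcal O$. Then $\mathrm{ord}_v(P)=n'p^s$ with $n'\mid n$, $s\le r$, and $(n',s)\neq(n,r)$.
\begin{itemize}
\item If $n'<n$, pick a prime $\ell\mid n/n'$. Then $(m/\ell)P$ reduces to $\mathcal O$, and Lemma~\ref{detecting torsion points}(2) gives $h_v(mP)=h_v((m/\ell)P)$, since $\ell$ is coprime to $p$.
\item If $n'=n$ and $s<r$, then $(m/p)P$ reduces to $\mathcal O$, and Step~2 gives $h_v(mP)=p\,h_v((m/p)P)$.
\end{itemize}
Since all $h_v\ge 0$, in either case
\[
h_v(mP)\le \sum_{\ell\mid n}h_v\Big(\frac m\ell P\Big)+p\,h_v\Big(\frac mpP\Big).
\]

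\emph{Step 4: the places in $S$.} Put $P':=p^rP$, which is non-torsion. Lemma~\ref{Lemma: weak Siegel} applied to $P'$ shows that $h_v(nP')=h_v(mP)$ is bounded as $n$ ranges over integers coprime to $p$. So for $v\in S$ we have $h_v(mP)\le \epsilon\, h(mP)$ once $n$ is large.

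\emph{Step 5: summing and comparing heights.} Summing over all places, then using Proposition~\ref{Proposition: Neron-Tate height}(4) and quadraticity of $\hat h$, gives
\[
(1-\#S\,\epsilon)\,m^2\hat h(P)\le m^2\hat h(P)\Big(\sum_{\ell\mid n}\frac1{\ell^2}+\frac{p}{p^2}\Big)+O(m).
\]
The implied constant depends only on $E$, $p$ and $r$, because the number of terms is at most $n+1$. Now $\sum_{\ell\mid n}\ell^{-2}<\tfrac{\pi^2}{8}-\tfrac34<0.49$ as in the proof of Theorem~\ref{Elliptic curve}, and $1/p\le 1/5$ since $p>3$. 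So the bracket is at most $0.69<1$. Choosing $\epsilon$ small and letting $n\to\infty$ forces $\hat h(P)=0$, which contradicts $P$ being non-torsion. This produces the threshold $N(r)$.
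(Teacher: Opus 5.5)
Your proof is correct, but it takes a different route from the paper's.

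The paper does no new local analysis. It uses ordinarity only to produce $Q\in E(\overline K)$ of exact order $p^r$, then passes to $L=K(E[p^r])$. It applies Theorem~\ref{Elliptic curve} to the non-torsion point $P-Q\in E_L(L)$, which gives a place $w$ with $\mathrm{ord}_w(P-Q)=n$. It then uses that $\mathrm{ord}_w(Q)=p^r$ and $\mathrm{ord}_w(P)=\mathrm{ord}_v(P)$ for all but finitely many $w$. The elementary fact that $\mathrm{ord}(P-Q)=n$ and $\mathrm{ord}(Q)=p^r$ with $\gcd(n,p)=1$ force $\mathrm{ord}(P)=np^r$ finishes the proof.

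You instead stay over $K$ and rerun the height-counting argument with one new local input: $h_v(pQ)=p\,h_v(Q)$ on the kernel of reduction at good ordinary places. A $p$-imprimitive place therefore costs only $1/p$ in the final coefficient, and $\sum_{\ell\mid n}\ell^{-2}+1/p<1$ closes the argument.

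The step you flagged does hold integrally. Over any ring of characteristic $p$, a formal-group endomorphism with vanishing linear term has zero derivative (compare invariant differentials). Hence $[p](T)=g(T^p)$ over $\mathcal O_{K_v}$, and height one of the reduced formal group forces $g'(0)\in\mathcal O_{K_v}^\times$.

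The two routes trade off as follows. The paper's reduction is shorter and uses Theorem~\ref{Elliptic curve} as a black box. In exchange, it works over the extension $L$ (which may be inseparable over $K$), relies on reduction-compatibility facts from \cite{Per08}, and inherits $N(r)$ opaquely from the new pair $(E_L,P-Q)$. Your version avoids field extensions and keeps $S$ and the constant $c$ independent of $r$; the dependence on $r$ enters only through the weak Siegel bound for $p^rP$ at the places of $S$. It also shows concretely where ordinarity enters, namely through the height of the formal group at the good places.
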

\begin{remark}
This corollary is weaker than \cite[Theorem~2.3 \& 2.4]{Nas16}, in which a uniform lower bound result for all $n$ (not necessarily coprime to $p$) that can be realized as reduction orders is given for all ordinary elliptic curves.
\end{remark}
\begin{proof}
Because $E$ is ordinary, there exists a point $Q\in E(\overline{K})$ of order $p^r$. We set $L:=K(E[p^r])$ the $p^r-$division field of $E$, and denote $E_L$ the base change of $E$ to $L$. Viewing $P$ as an $L-$ point of $E_L$, we observe that $P-Q\in E_L(L)$ is also a non-torsion point. We note the following properties of the reduction of points. 
\begin{enumerate}
    \item (\cite[Lemma~1.2.3]{Per08}) For all but finitely many places $v$ of $K$ the following holds:  if we write $w$ for any place of $L$ lying over $v$, then there is an inclusion $E(\kappa_v)\xhookrightarrow{}E_L(\kappa_w)$, so for every point in $E(K)$, the orders modulo $v$ and modulo $w$ agree.
    \item (\cite[Cor.~2.3.4]{Per08}) Since $Q$ is torsion,  $\mathrm{ord}_w(Q)$ equals the order of $Q$, which is $p^r$, for all but finitely many places $w$ of $L$.\footnote{This claim can also be proved by using the proof of Lemma \ref{detecting torsion points}, since for every $0\leq l<r$, there are only finitely many places $w$ such that $w(y(p^lQ))<0$.}

\end{enumerate}
We note that the proofs of those properties given
in \cite{Per08} also work well over global function fields. We call $V$ the set of  \emph{exceptional places} of $K$ in (1)
and  call $U$ the set of  \emph{exceptional places} of $L$ in (2)
and of places of $L$ lying over places in $V$. Both $V$ and $U$ are finite sets. 

Now, applying Theorem~\ref{Elliptic curve} for $P-Q\in E_L(L)$, there exists an integer $N(r)$ depending on $r$ such that for every integer $n>N(r)$ coprime to $p$, there exists a place $w\in M_{L}\setminus U$ of good reduction so that the order of $P-Q$ mod $w$ is $n$. In other words, we have $\mathrm{ord}(\mathrm{red}_w(P-Q))=n$,  $\mathrm{ord}(\mathrm{red}_w(Q))=p^r$ and $\mathrm{ord}(\mathrm{red}_w(P))=\mathrm{ord}(\mathrm{red}_v(P))$ where $v$ is the place in $M_K$ lying below $w$. The desired result then follows from the following group-theoretic result.
 \end{proof}
\begin{lemma}
  Let $n$ be a positive integer coprime to a prime number $p$. Let $P$ and $Q$ be two elements in a given commutative group such that $\mathrm{ord}(P-Q)=n$ and $\mathrm{ord}(Q)=p^r$. Then $\mathrm{ord}(P)=np^r$.
\end{lemma}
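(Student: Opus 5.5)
Write $P=(P-Q)+Q$ and set $R:=P-Q$, so that $\mathrm{ord}(R)=n$ and $\mathrm{ord}(Q)=p^r$ with $\gcd(n,p^r)=1$. The plan is to show that $\mathrm{ord}(P)$ divides $np^r$ and that both $n$ and $p^r$ divide $\mathrm{ord}(P)$. Coprimality then forces $\mathrm{ord}(P)=np^r$.

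For the upper bound, commutativity gives
\[
np^r P = p^r(nR) + n(p^rQ) = 0 .
\]
Hence $m:=\mathrm{ord}(P)$ divides $np^r$, and in particular $P$ has finite order.

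For the lower bound, note first that $mP=0$. Multiplying by $p^r$ gives
\[
0 = mp^rP = mp^rR + m(p^rQ) = mp^rR .
\]
So $n \mid mp^r$, and since $\gcd(n,p^r)=1$ we get $n\mid m$. Multiplying $mP=0$ by $n$ instead gives $0=mnP=mnQ+m(nR)=mnQ$, so $p^r\mid mn$ and therefore $p^r\mid m$. Since $n$ and $p^r$ are coprime and both divide $m$, their product $np^r$ divides $m$.

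Combining $m\mid np^r$ with $np^r\mid m$ gives $m=np^r$. This is purely elementary; the only point needing care is to use commutativity when distributing the multiples over the sum $R+Q$.
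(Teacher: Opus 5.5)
Your proposal is correct and follows essentially the same route as the paper: first show $np^rP=0$, then multiply by $p^r$ and by $n$ to force $n$ and $p^r$ to divide the order. The only cosmetic difference is that the paper writes $\mathrm{ord}(P)=mp^s$ with $m\mid n$, $s\le r$ and shows $m=n$, $s=r$, whereas you prove $n\mid\mathrm{ord}(P)$ and $p^r\mid\mathrm{ord}(P)$ directly.
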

 \begin{proof}
 Let $\mathcal O$ be the identity element.      Since $np^rP=np^r(P-Q)+np^rQ=np^r(P-Q)=\mathcal O$, the order $\mathrm{ord}(P)$ must be of the form $mp^s$ where $m|n$ and $s\leq r$. Then $$\mathcal{O}=mp^rP=mp^rQ+mp^r(P-Q)=mp^r(P-Q), $$
hence $n|mp^r$ which implies that $m=n$. Similarly we have $$\mathcal{O}=np^sP=np^sQ+np^s(P-Q)=np^sQ. $$Thus $p^r|np^s$ which yields that $s=r$. Therefore, $\mathrm{ord}(P)=np^r$ as wanted.
 \end{proof}
 \begin{remark}
The analogue of Theorem~\ref{Elliptic curve}  fails for one-dimensional unipotent groups in characteristic $p$. Indeed, for any field $K$ of characteristic $p$, the group $U(K)$ of $K$-points of a
connected unipotent group $U$ is a $p$-group; in particular every element has
order a power of $p$ (and for $U=\mathbb G_a$ every nonzero element has order exactly $p$).
Hence, one cannot realize integers $n$ coprime to $p$ as reduction orders in this setting.
\end{remark}

\begin{remark}
As noticed in Remark \ref{remark: failure in higher dimensions}, for higher-dimensional cases, Theorem~\ref{Elliptic curve} does not hold for  (semi-)abelian varieties. For example, consider an elliptic curve $E_1$ over $K$ containing a non-torsion point $Q\in E_1(K)$ (see \cite{TS67} for infinitely many examples), and an elliptic curve $E_2$ that contains a torsion point $R\in E_2(K)$ of order $m>1$ coprime to $p$ (since we always have $E_2[m]\cong \mathbb Z/m\times\mathbb Z/m$, the existence of such $E_2$ is ensured by extending the constant field $\mathbb F_q$ to its suitable finite extension). Set $G:=E_1\times E_2$  and consider its non-torsion $K-$point $(Q,R)$. Now, if for any $n$ sufficiently large coprime to $p$, there is some $v\in M_K$ satisfying the order of $(Q,R)$ mod $v$ is $n$, then in particular, $\mathrm{ord}_v(R)=n$ for infinitely many $v$, which is absurd.
\end{remark}

\subsection{Failure of Siegel's theorem in positive characteristic}
This section is devoted to providing the examples mentioned in Remark \ref{Remark: Failure of Siegel's theorem} in the introduction.

While it is proved by Lang \cite[Chapter~7]{Lang-1983} that Roth's theorem is true for function fields in characteristic 0,  it was first noticed by Mahler \cite{Mahler-1949} that Roth's theorem does not hold for function fields in characteristic $p>0$ if considering algebraic elements of degree a power of $p$. It was shown further by Osgood \cite{Osg75} and Baum--Sweet \cite{BS76} that even after excluding such elements, Roth's theorem still does not hold in characteristic $p$. However, it is not clear from the literature whether Siegel's Theorem (see \cite{CH99} and \cite[Chapter~IX, Theorem~3.1]{Sil86}, formulated in terms of Roth-type height decay/local-to-global distribution), whose proof relies crucially on Roth's theorem,  holds in characteristic $p$ or not. By observing that Frobenius and purely inseparable isogenies can force $p$-power multiples of a non-torsion point to remain integral away from a fixed place, we produce concrete counterexamples to such a theorem of Siegel (Proposition \ref{rem:local-global-fails}) and to the second assertion of Theorem \ref{Theorem: Cheon-Hahn} (Proposition \ref{prop:failure-all-but-finitely-many}). 

We make the observation more explicitly in our setting. The following construction is extracted and repackaged from \cite[Example~9.3]{Nas16}.

\begin{lemma}[A supersingular isotrivial twist with integral $p$-power multiples] \label{lem:supersingular-isotrivial} Let $p\ge 5$ be a prime and let $K=\mathbb{F}_p(t)$. Denote by $v_\infty \in M_K$ the place at infinity, corresponding to the valuation of $1/t$. Choose a supersingular elliptic curve 
\[ 
E_0/\mathbb{F}_p:\quad Y^2=X^3+\alpha X+\beta, \qquad \alpha,\beta\in\mathbb{F}_p,\ \ \Delta(E_0)=-16(4\alpha^3+27\beta^2)\neq 0, 
\] 
and set 
\[ 
r(t):=t^3+\alpha t+\beta\in\mathbb{F}_p[t]. 
\] 
Define the (isotrivial) elliptic curve 
\[ 
E/K:\quad y^2=x^3+\alpha r(t)^2x+\beta r(t)^3 
\] 
and the $K$-rational point 
\[ R:=(t\,r(t),\,r(t)^2)\in E(K). \] 
Then for every $k\ge 0$, one has 
\[ 
p^kR=\big(t^{p^{2k}}\,r(t),\; (-1)^k\,r(t)^{(p^{2k}+3)/2}\big)\in E(K). 
\] 
In particular, $R$ is a non-torsion point. Moreover, for every finite place $v\neq v_\infty$ of $K$ and every $k\ge 0$, both affine coordinates of $p^kR$ lie in $\mathcal{O}_v$. 
\end{lemma}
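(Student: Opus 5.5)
The plan is to recognize $E$ as the quadratic twist of the constant curve $E_0$ by $r(t)$, and to compute $p^kR$ through the $p$-power Frobenius endomorphism of $E_0$.

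\emph{Step 1: untwisting.} Let $L:=K(s)$ with $s^2=r(t)$. The map
\[
\psi:E\to E_0,\qquad (x,y)\mapsto \bigl(x/r,\;y/(r s)\bigr),
\]
is an isomorphism defined over $L$. Indeed, substituting $x=rX$ and $y=rsY$ into $y^2=x^3+\alpha r^2x+\beta r^3$ gives $r^3Y^2=r^3(X^3+\alpha X+\beta)$. Under $\psi$, the point $R$ maps to $R_0:=(t,\,r^2/(rs))=(t,s)$. This lies on $E_0$ because $s^2=r=t^3+\alpha t+\beta$.

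\emph{Step 2: Frobenius.} Since $E_0$ is defined over $\mathbb F_p$, the $p$-power Frobenius $\phi:(X,Y)\mapsto(X^p,Y^p)$ is an endomorphism of $E_0$. It acts this way on points with coordinates in any field of characteristic $p$, in particular on $E_0(L)$. It satisfies $\phi^2-a_p\phi+p=0$, where $a_p=p+1-\#E_0(\mathbb F_p)$. Supersingularity gives $a_p\equiv 0\pmod p$, and Hasse's bound gives $|a_p|\le 2\sqrt p<p$ for $p\ge5$. Hence $a_p=0$, so $[p]=-\phi^2$, and by induction $[p^k]=(-1)^k\phi^{2k}$. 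This gives
\[
p^kR_0=\bigl(t^{p^{2k}},\,(-1)^k s^{p^{2k}}\bigr).
\]

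\emph{Step 3: transporting back.} Applying $\psi^{-1}:(X,Y)\mapsto(rX,rsY)$, which is a group isomorphism, gives
\[
p^kR=\bigl(t^{p^{2k}}r,\;(-1)^k r\,s^{p^{2k}+1}\bigr).
\]
Since $p^{2k}+1$ is even, $s^{p^{2k}+1}=r^{(p^{2k}+1)/2}$, and therefore the $y$-coordinate is $(-1)^kr^{(p^{2k}+3)/2}$. This is the claimed formula, and both coordinates lie in $\mathbb F_p[t]\subset K$.

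\emph{Step 4: consequences.} Both coordinates of $p^kR$ are polynomials in $t$, so they lie in $\mathcal O_v$ for every finite place $v\neq v_\infty$. The $x$-coordinates have degree $p^{2k}+3$ in $t$, so they are pairwise distinct. Thus the points $p^kR$ are pairwise distinct, which is impossible for a torsion point, since a torsion point has only finitely many multiples. Hence $R$ is non-torsion. Alternatively, $h(p^kR)\to\infty$ together with Proposition~\ref{Proposition: Neron-Tate height} gives the same conclusion.

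The only delicate point is Step 2: justifying $[p]=-\phi^2$ rather than merely $[p]=\pm\phi^2$ up to an automorphism. This is settled by $a_p=0$, and it is the one place where $p\ge5$ is used. The remaining steps are bookkeeping of the twist, where one must check that the signs and square roots are consistent.
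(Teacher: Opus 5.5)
Your proposal is correct and is essentially the paper's proof: your untwisting map $(x,y)\mapsto(x/r,\,y/(rs))$ is the paper's $\varphi$ (since $rs=s^3$), and the relation $[p^k]=(-1)^k\phi^{2k}$ obtained from $a_p=0$ via Hasse's bound, followed by transporting $p^kR_0$ back along the group isomorphism, is exactly the paper's argument. Your non-torsion argument (the $x$-coordinates of the $p^kR$ have distinct degrees $p^{2k}+3$, so these points are pairwise distinct) is a slightly more explicit version of the paper's remark that $\deg(y(p^kR))\to\infty$.
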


\begin{proof}
Substituting $x=t r(t)$ and $y=r(t)^2$ into the defining equation of $E$ gives
\[
y^2=r(t)^4=(t^3+\alpha t+\beta)\,r(t)^3=x^3+\alpha r(t)^2x+\beta r(t)^3,
\]
so indeed $R\in E(K)$.

Let $L:=K(s)$ with $s^2=r(t)$.  Over $L$ the change of variables
\[
\varphi:E_L \xrightarrow{\ \sim\ } (E_0)_L,\qquad
(x,y)\longmapsto \Big(\frac{x}{r(t)},\,\frac{y}{s^3}\Big)
\]
is an isomorphism, with inverse $(X,Y)\mapsto (r(t)X,\,s^3Y)$.  Under $\varphi$, we have
\[
Q:=\varphi(R)=(t,s)\in E_0(L).
\]

Let $\pi$ denote the $p$-power Frobenius endomorphism of $E_0$, so
$\pi(X,Y)=(X^p,Y^p)$.  Since $E_0/\mathbb{F}_p$ is supersingular, one has
$a_p\equiv 0\pmod p$ where $a_p:=p+1-E_0(\mathbb F_p)$.  For $p\ge 5$ the Hasse bound $|a_p|\le 2\sqrt{p}<p$
forces $a_p=0$, so the characteristic polynomial of $\pi$ is $T^2+p$, hence
$\pi^2=[-p]$ in $\mathrm{End}(E_0)$.
Therefore
\[
[p]=[-1]\circ \pi^2
\qquad\text{and hence}\qquad
[p^k]=[-1]^k\circ \pi^{2k}\quad (k\ge 0).
\]
Applying this relation to $Q$ yields
\[
p^kQ=[-1]^k\big(t^{p^{2k}},\,s^{p^{2k}}\big)
=\big(t^{p^{2k}},\,(-1)^k s^{p^{2k}}\big)\in E_0(L).
\]
Transporting back via $\varphi^{-1}$ gives
\[
p^kR
=\Big(r(t)\,t^{p^{2k}},\ s^3\cdot (-1)^k s^{p^{2k}}\Big)
=\Big(t^{p^{2k}}r(t),\ (-1)^k s^{p^{2k}+3}\Big).
\]
Since $p^{2k}+3$ is even, we may write $p^{2k}+3=2\cdot \frac{p^{2k}+3}{2}$ and
thus $s^{p^{2k}+3}=(s^2)^{(p^{2k}+3)/2}=r(t)^{(p^{2k}+3)/2}\in K$, proving the
displayed formula. This is the same computation as in
\cite[Example~9.3]{Nas16}.

Finally, since $\deg(y(p^kR))=\frac{3(p^{2k}+3)}{2}\to\infty$ as $k\to\infty$, $R$ is non-torsion. If $v\neq v_\infty$ is a finite place of $K$, then $t\in\mathcal{O}_v$
and $r(t)\in\mathcal{O}_v$, so the explicit formula shows
$x(p^kR),y(p^kR)\in\mathcal{O}_v$ for all $k\ge 0$.
\end{proof}

The obstruction is genuinely positive-characteristic: for supersingular $E_0$ one has $[p]=[-1]\circ \pi^2$ with $\pi$ Frobenius, so multiplication-by-$p$ is purely inseparable. In the above isotrivial twist this forces the $p$-power multiples $[p^k]P$ to remain integral
away from the place at infinity. It follows that the local-to-global ratio in Siegel's Theorem \cite[Chapter~IX, Theorem~3.1]{Sil86} need not vanish in characteristic $p$.
\begin{proposition}
\label{rem:local-global-fails}
Let $p\ge 5$ and $K=\mathbb{F}_p(t)$.  Fix a place $v\in M_K$.  In general, for an elliptic curve $E$ over $K$, the
limit $\displaystyle\lim\limits _{\substack{%
  P\in E(K)  \\h(P)\to\infty
     }}\frac{h_v(P)}{h(P)}$ might not exist. In particular,
$\displaystyle\lim\limits _{\substack{%
  P\in E(K)  \\h(P)\to\infty
     }}\frac{h_v(P)}{h(P)}=0$
need not hold.
\end{proposition}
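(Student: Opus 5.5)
We use the curve $E$ and the point $R$ of Lemma~\ref{lem:supersingular-isotrivial}, and take $v=v_\infty$. The plan is to exhibit two sequences of points with $h\to\infty$ along which the ratio $h_{v_\infty}/h$ tends to different values. The first sequence is $R_k:=p^kR$; here the ratio will be identically $1$. The second is $nR$ with $n\to\infty$, $\gcd(n,p)=1$; here Lemma~\ref{Lemma: weak Siegel} gives ratio tending to $0$. Two different limiting values show the limit does not exist, and in particular it is not $0$.

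\textbf{Step 1: the ratio along $p^kR$.} By Lemma~\ref{lem:supersingular-isotrivial}, both coordinates of $p^kR$ lie in $\mathcal O_v$ for every finite place $v\neq v_\infty$. Hence $h_v(p^kR)=-\min\{0,v(x),v(y)\}\deg(v)=0$ for all such $v$. Since $h=\sum_v h_v$, this gives $h(p^kR)=h_{v_\infty}(p^kR)$. We can compute it explicitly. We have $v_\infty(y(p^kR))=-3(p^{2k}+3)/2$ and $v_\infty(x(p^kR))=-(p^{2k}+3)$, and $\deg v_\infty=1$, so
\[
h_{v_\infty}(p^kR)=h(p^kR)=\tfrac{3}{2}(p^{2k}+3)\longrightarrow\infty .
\]
Thus the ratio equals $1$ along this sequence.

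\textbf{Step 2: the ratio along multiples prime to $p$.} Lemma~\ref{lem:supersingular-isotrivial} shows $R$ is non-torsion, and $p>3$. So Lemma~\ref{Lemma: weak Siegel} applies at $v=v_\infty$: the values $h_{v_\infty}(nR)$ stay bounded for $\gcd(n,p)=1$. Meanwhile $h(nR)\ge \hat h(nR)-c=n^2\hat h(R)-c\to\infty$ by Proposition~\ref{Proposition: Neron-Tate height}. Hence the ratio tends to $0$ along this sequence, and both sequences have $h\to\infty$.

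The main point needing care is that Lemma~\ref{Lemma: weak Siegel} is proved after rescaling to an equation that is minimal at $v$. Changing $x\mapsto u^2x$ changes $h_v$ by a bounded amount, so boundedness of $h_{v_\infty}(nR)$ survives for the original equation; this is what the proof of that lemma already asserts. As an optional check, we could instead compute directly in the formal group at $v_\infty$ after a minimal model change. With the step above justified, the proposition follows, and the example is valid for every supersingular $E_0/\mathbb F_p$ and every prime $p\ge5$.
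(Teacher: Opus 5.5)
Your proposal is correct and follows essentially the same route as the paper. Along $p^kR$ the point is integral away from $v_\infty$, so the ratio is identically $1$ while $h(p^kR)\to\infty$, and Lemma~\ref{Lemma: weak Siegel} forces the ratio along $nR$ with $\gcd(n,p)=1$ to tend to $0$. Your explicit value $h(p^kR)=\tfrac32(p^{2k}+3)$ and your check that the rescaling $x\mapsto u^2x$ moves $h_v$ by at most $3|v(u)|\deg(v)$ are both correct, and the paper leaves them implicit.
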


\begin{proof}
Let $E/K$ and $R\in E(K)$ be as in Lemma~\ref{lem:supersingular-isotrivial}, and
take $v=v_\infty$. We set $P_k:=p^kR$.

By Lemma~\ref{lem:supersingular-isotrivial}, for every $k\ge 0$ and every finite
place $w\neq v_\infty$, the point $P_k$ has $w$-integral coordinates, hence
$h_w(P_k)=0$.  Therefore
$
h(P_k)=h_{v_\infty}(P_k). $ Consequently,
\[
\frac{h_{v_\infty}(P_k)}{h(P_k)}=1 \qquad\text{for all }k\ge 1.
\]
Since $R$ is non-torsion, $h(P_k)\to\infty$ as $k\to\infty$ by the Northcott property. 
On the other hand, by Lemma \ref{Lemma: weak Siegel}, we know that $\displaystyle\lim\limits _{\substack{%
 \gcd(n,p)=1  \\n\to\infty}}
     \dfrac{h_v(nR)}{h(nR)}=0$. Therefore, the claimed limit might not exist.
\end{proof}
\begin{remark}
   In contrast, for function fields in characteristic 0, due to Manin and Voloch, $h_v(P)$ is always bounded above as $P$ ranges over $E(K)$, see \cite[Chapter~III, Theorem~12.2]{Sil94}. The proof crucially relies on the assumption that the base field has characteristic $0$.
\end{remark}
Consequently, the following assertion (the second claim in Theorem \ref{Theorem: Cheon-Hahn}) does not hold in characteristic $p>0$: for all but finitely many $P\in E(K)$, for every $n\ge 1$ with $(n,p)=1$ there exists a good place $v$ with $\mathrm{ord}_v({P})=n$.
\begin{proposition}
\label{prop:failure-all-but-finitely-many}
Let $p\ge 5$ and $K=\mathbb{F}_p(t)$.  There exists an elliptic curve $E/K$ and
infinitely many non-torsion points $P\in E(K)$ such that there is \emph{no} place
$v$ of good reduction of $E$ for which the reduction $\mathrm{red}_v({P})\in E(\kappa_v)$
is the identity (equivalently, has order $1$).

\end{proposition}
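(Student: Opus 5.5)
We use the curve $E/K$ and the point $R$ of Lemma~\ref{lem:supersingular-isotrivial}, and take the infinitely many points to be $P_k:=p^kR$ for $k\ge 0$. Each $P_k$ is non-torsion because $R$ is. The goal is to show that no place $v$ of good reduction has $\mathrm{red}_v(P_k)=\mathcal O$.

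First we determine the places of good reduction. The discriminant of $y^2=x^3+\alpha r(t)^2x+\beta r(t)^3$ is $r(t)^6\Delta(E_0)$. Hence at every finite place $v$ not dividing $r(t)$, the given model is integral with unit discriminant, so $v$ is a place of good reduction and this Weierstrass model is minimal there. At finite places dividing $r(t)$ the reduction is bad. Indeed, $E$ is the quadratic twist of $E_0$ by $r$, and $v(r)=1$ since $r$ is separable because $\Delta(E_0)\neq0$; twisting by an odd-valuation element gives additive reduction. In any case we only need good reduction places to be among the finite places with $v\nmid r$, together possibly with $v_\infty$.

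Next we treat the finite good places. At such a place $v$ the model is minimal and integral, and by Lemma~\ref{lem:supersingular-isotrivial} both coordinates of $P_k$ lie in $\mathcal O_v$. A point with $v$-integral affine coordinates on a $v$-integral Weierstrass model reduces to an affine point of the reduced curve, never to $\mathcal O$. Equivalently, by the proof of Lemma~\ref{detecting torsion points}, reduction to $\mathcal O$ would force $v(y)<0$. So $\mathrm{red}_v(P_k)\neq\mathcal O$ at these places.

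The remaining, and main, step is the place $v_\infty$. Here we must show that $E$ has bad reduction at $\infty$, so that $v_\infty$ is excluded from the statement. In the uniformizer $u=1/t$ we substitute $x=t^{6}x'$ and $y=t^{9}y'$; since $\deg r=3$, this gives $y'^2=x'^3+\alpha (r/t^3)^2x'+\beta(r/t^3)^3$, whose coefficients are $u$-integral with reduction $y'^2=x'^3+\alpha x'+\beta$ at $u=0$. So the curve actually has good reduction at $\infty$. Thus $v_\infty$ is a good place, and we must check $P_k$ there directly. We have $x'(P_k)=t^{p^{2k}+3-6}\cdot(r/t^3)$, which has a pole at $\infty$, so $P_k$ does reduce to $\mathcal O$ at $v_\infty$. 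This breaks the naive choice. The fix is to change the point: by the formula of Lemma~\ref{lem:supersingular-isotrivial}, $P_k=(t^{p^{2k}}r,\pm r^{(p^{2k}+3)/2})$. We instead take the twist by $r(t)$ where $r$ has even degree, replacing $r$ by a separable quartic $r$ such as $r=(t-c)(t^3+\alpha t+\beta)$. Then $E$ has bad (additive) reduction at $\infty$, because the twist by $r$ at $\infty$ has odd valuation $-\deg r$ after normalising... which fails for degree $4$. So instead we choose $r$ of odd degree $\ge3$ but twist so that $\infty$ is ramified in $K(\sqrt r)$: $\infty$ ramifies exactly when $\deg r$ is odd. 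Redoing the change of variables with weights $t^{2\cdot\lceil\deg r/2\rceil}$ leaves an odd power of $u$, so the reduction at $\infty$ is additive. I expect this bookkeeping, namely checking that the point construction of Lemma~\ref{lem:supersingular-isotrivial} still works for a suitable $r$ ramified at $\infty$ (for example $Q=(t,s)$ lying on $E_0$ forces $r=t^3+\alpha t+\beta$, which has degree $3$, odd, so $\infty$ is ramified) and verifying additive reduction at $v_\infty$ via Tate's algorithm, to be the main obstacle. Once it is settled, $v_\infty$ is bad and the proof is complete.
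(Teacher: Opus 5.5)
There is a genuine gap, and it sits at the step you yourself call the main one: the place $v_\infty$.

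\textbf{The computation at $v_\infty$ is wrong.} The substitution $x=t^6x'$, $y=t^9y'$ divides $a_4=\alpha r^2$ by $t^{12}$ and $a_6=\beta r^3$ by $t^{18}$. With $u=1/t$ and $r/t^3=1+\alpha u^2+\beta u^3$ this gives
\[
y'^2=x'^3+\alpha u^6\,(1+\alpha u^2+\beta u^3)^2\,x'+\beta u^9\,(1+\alpha u^2+\beta u^3)^3 .
\]
Its reduction at $u=0$ is the cuspidal cubic $y'^2=x'^3$, not $E_0$. To reach your displayed equation you would need the scaling factor $t^{3/2}$, which is not in $K$; this is exactly the ramification of $K(\sqrt r)/K$ at $\infty$.

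Consequently several of your intermediate claims are false. $E$ does not have good reduction at $v_\infty$, and $P_k$ does not reduce to $\mathcal O$ at a good place. The detour through a quartic $r$ is unnecessary. The proposal then ends by deferring the one fact that must be proved, namely that $v_\infty$ is a bad place, to bookkeeping you never carry out. Even that bookkeeping would not suffice as described: exhibiting one model with an odd power of $u$ does not rule out some other Weierstrass model having good reduction at $\infty$.

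\textbf{The missing idea is a one-line invariant.} We have $v_\infty(\Delta(E))=6\,v_\infty(r)=-18$. Any admissible change of Weierstrass coordinates multiplies $\Delta$ by $\lambda^{-12}$ for some $\lambda\in K^\times$. Hence $v_\infty(\Delta)\equiv 6\pmod{12}$ for every Weierstrass model of $E$, so it is never $0$. Therefore $E$ has bad reduction at $v_\infty$, and $v_\infty$ is excluded from the statement; this is the paper's argument.

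An equivalent route makes your ramification heuristic rigorous. $E$ is the quadratic twist of the constant curve $E_0$ by $K(\sqrt r)/K$, which is ramified at $\infty$ because $\deg r=3$ is odd. So inertia at $\infty$ acts on $T_\ell E$ (for $\ell\neq p$) through a nontrivial character, by $-1$. N\'eron--Ogg--Shafarevich then forces bad reduction.

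Once $v_\infty$ is known to be bad, your treatment of the finite places finishes the proof. That part is correct and matches the paper: unit discriminant and integral coordinates of $P_k$ at each finite $v\nmid r$.
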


\begin{proof}
Let $E/K$ and $R\in E(K)$ be as in Lemma~\ref{lem:supersingular-isotrivial}, and
for each $k\ge 0$,  set $P_k:=p^kR$.

Writing $E:y^2=x^3+Ax+B$ with $A=\alpha r(t)^2$ and $B=\beta r(t)^3$, we have
\[
\Delta(E)=-16(4A^3+27B^2)=\Delta(E_0)\,r(t)^6,
\qquad \Delta(E_0)\in \mathbb{F}_p^\times.
\]
Hence every finite place $v$ such that $v(r(t)) = 0$ is a place of good reduction,
and the only finite places of bad reduction are those corresponding to the zeros of $r(t)$.

At $v_\infty$ we have ${v_\infty}(r(t))=-3$, so ${v_\infty}(\Delta(E))=-18$.
Under an admissible change of variables $(x,y)=(u^2x',u^3y')$, the discriminant
valuation changes by a multiple of $12$.  Since $-18\not\equiv 0\pmod{12}$, no
admissible change can make ${v_\infty}(\Delta)$ equal to $0$, hence $E$ has
\emph{bad} reduction at $v_\infty$.  

Let $v$ be a place of good reduction, then $v\neq v_\infty$.
By Lemma~\ref{lem:supersingular-isotrivial}, both affine coordinates of $P_k$ lie in $\mathcal{O}_v$,
so  $\mathrm{red}_v({P_k}) \neq\mathcal{O}$ by Lemma \ref{detecting torsion points}, hence $\mathrm{ord}_v({P_k})\neq1$. Since $R$ is non-torsion, $\{P_k:k\ge 0\}$ is an infinite set of non-torsion points with the stated property.
\end{proof}

\section{Unlikely intersection and a relative Silverman theorem}\label{sec:unlikely}

Throughout this section, we work over $\mathbb{C}$ and with complex analytifications, except in Section \ref{section: Zilber--Pink}.

\subsection{Uniformization and Betti coordinates}

Let $A/\mathbb{C}$ be an abelian variety of dimension $g$.
The analytic exponential map
\[
\exp_A : \operatorname{Lie}(A) \simeq \mathbb{C}^g \longrightarrow A(\mathbb{C})
\]
is a surjective holomorphic homomorphism with kernel a lattice
$\Lambda:=\ker(\exp_A)\simeq \mathbb{Z}^{2g}$.
Fix a $\mathbb{Z}$--basis $\omega_1,\dots,\omega_{2g}$ of $\Lambda$.
Viewing $\operatorname{Lie}(A)\simeq \mathbb{C}^g$ as a real vector space of dimension $2g$,
the vectors $\omega_1,\dots,\omega_{2g}$ form an $\mathbb{R}$--basis.
Hence every $z\in \operatorname{Lie}(A)$ can be written uniquely as
\[
z=\sum_{i=1}^{2g} b_i(z)\,\omega_i\qquad (b_i(z)\in \mathbb{R}).
\]
For $x\in A(\mathbb{C})$, we choose any $z\in \operatorname{Lie}(A)$ with $\exp_A(z)=x$, and define
\[
b_A(x):=\big(b_1(z),\dots,b_{2g}(z)\big)\bmod \mathbb{Z}^{2g}\ \in\
\mathbb{T}^{2g}:=\mathbb{R}^{2g}/\mathbb{Z}^{2g}.
\]
This is well-defined because replacing $z$ by $z+\lambda$ for $\lambda\in \Lambda$
adds an element of $\mathbb{Z}^{2g}$ to the coordinate vector.
The map $b_A:A(\mathbb{C})\to \mathbb{T}^{2g}$ is a real-analytic isomorphism of real Lie groups;
changing the $\mathbb{Z}$--basis of $\Lambda$ changes $b_A$ by an automorphism of $\mathbb{T}^{2g}$
induced by $\mathrm{GL}_{2g}(\mathbb{Z})$.

\begin{remark}\label{rem:torsion-betti}
A point $x\in A(\mathbb{C})$ is torsion if and only if $b_A(x)$ is a torsion point of $\mathbb{T}^{2g}$.
Moreover, $x$ has \emph{exact order} $N$ if and only if $b_A(x)$ has exact order $N$,
equivalently $b_A(x)$ admits a representative of the form
$\frac{1}{N}(a_1,\dots,a_{2g})$ with $a_i\in\mathbb{Z}$ and $\gcd(N,a_1,\dots,a_{2g})=1$.
\end{remark}

\subsection{Betti map}\label{subsec:betti-map}

We briefly recall the Betti map and refer to \cite[\S 3--\S 4]{Gao20} and \cite[\S 2.3]{GH23}
for detailed constructions and properties.

Let $\pi:\mathcal A\to S$ be an abelian scheme over a smooth irreducible quasi-projective
variety $S/\mathbb{C}$ of relative dimension $g\ge 1$, and let $P:S\to \mathcal A$ be a section.
We say that $P$ is \emph{non-torsion} if its image is not contained in $\mathcal A[N]$
for any $N\ge 1$ (equivalently, the generic point $p(\eta)$ is not torsion in
$\mathcal A_\eta(\overline{\mathbb{C}(S)})$).  We write $S^{an}$ for the \emph{analytification} of $S$.

Let $s\in S(\mathbb{C})$.  After shrinking $S^{an}$ around $s$, we may choose a simply connected open neighborhood $\Delta\subset S^{an}$ of $s$ such that the integral homology local system
\[
\mathcal L \ :=\ \big(H_1(\mathcal A_t(\mathbb{C}),\mathbb{Z})\big)_{t\in\Delta}
\qquad\text{(equivalently, $R^1\pi_\ast\mathbb{Z}$ up to duality)}
\]
is trivial on $\Delta$. Over $\Delta$, the abelian scheme $\mathcal A_\Delta:=\pi^{-1}(\Delta)$ admits an analytic
uniformization of the form
\[
\mathcal A_\Delta^{an}\ \simeq\ (\Delta\times \mathbb{C}^g) / \Lambda_\Delta,
\]
where $\Lambda_\Delta\simeq \mathbb{Z}^{2g}$ is a locally constant lattice in $\Delta\times \mathbb{C}^g$.
Fix a $\mathbb{Z}$--basis of ``periods''
\[
\omega_1(t),\dots,\omega_{2g}(t)\in \mathbb{C}^g\qquad (t\in \Delta),
\]
varying holomorphically with $t\in \Delta$, such that
$\Lambda_t=\mathbb{Z}\omega_1(t)\oplus\cdots\oplus \mathbb{Z}\omega_{2g}(t)$ is the period lattice of the fiber
$\mathcal A_t(\mathbb{C})$.

For $x\in \mathcal A_\Delta(\mathbb{C})$ lying above $t=\pi(x)\in \Delta$, choose a lift
$\tilde x\in \mathbb{C}^g$ of $x$ in the uniformization of $\mathcal A_t$, and write uniquely
\[
\tilde x=\sum_{i=1}^{2g} b_i(x)\,\omega_i(t)\qquad(b_i(x)\in \mathbb{R}).
\]
The \emph{Betti map} is the real-analytic map
\begin{align*}
b_\Delta:\mathcal A_\Delta(\mathbb{C}) &\longrightarrow \mathbb{T}^{2g}=\mathbb{R}^{2g}/\mathbb{Z}^{2g},\\
x &\longmapsto (b_1(x),\dots,b_{2g}(x))\bmod \mathbb{Z}^{2g}.
\end{align*}
It is well-defined up to composition with a real-analytic automorphism of $\mathbb{T}^{2g}$
coming from $\mathrm{GL}_{2g}(\mathbb{Z})$, and for each $t\in \Delta$ its restriction
$b_\Delta|_{\mathcal A_t(\mathbb{C})}:\mathcal A_t(\mathbb{C})\to \mathbb{T}^{2g}$ is a group isomorphism.

If $P:S\to\mathcal A$ is a section of $\pi$, we write $b_\Delta\circ P:\Delta\to \mathbb{T}^{2g}$
for the corresponding Betti coordinates of $p$ on $\Delta$.

\begin{definition}\label{def:betti-rank}
Let $X\subset \mathcal A$ be an irreducible closed subvariety with $\pi(X)=S$.
Define the \emph{generic Betti rank} of $X$ by
\[
\mathrm{rank}_{\mathbb{R}}(db_\Delta|_X)\ :=\
\max_{x\in X^{\mathrm{sm}}(\mathbb{C})\cap \mathcal A_\Delta}
\Big(\mathrm{rank}_{\mathbb{R}}\big(d(b_\Delta|_{X\cap \mathcal A_\Delta})_x\big)\Big)
\]
where $X^{\mathrm{sm}}$ is the smooth locus of $X$. This number does not depend on the choice of $\Delta$ (nor on the period basis), and satisfies the bound
\[
\mathrm{rank}_{\mathbb{R}}(db_\Delta|_X)\ \le\ 2\min(\dim X,\,g).
\]
We say that $b_\Delta|_X$ is \emph{generically submersive} if
$\mathrm{rank}_{\mathbb{R}}(db_\Delta|_X)=2g$.
\end{definition}

Assume $S$ is smooth and let $X=P(S)\subset \mathcal A$.
Since $\pi$ is separated, $p$ is a closed immersion, hence $X$ is smooth and
$X^{\mathrm{sm}}=X$. Moreover, for any simply connected $\Delta\subset S^{\mathrm{an}}$ on which
$b_\Delta$ is defined, we have $X\cap \mathcal A_\Delta = p(\Delta)$ and
$b_\Delta|_{X\cap \mathcal A_\Delta}=b_\Delta\circ p$ on $\Delta$.
Consequently,
\[
\mathrm{rank}_{\mathbb{R}}(db_\Delta|_{P(S)}) \ =\
\max_{s\in \Delta}\Big(\mathrm{rank}_{\mathbb{R}}\big(d(b_\Delta\circ p)_s\big)\Big).
\]

\begin{remark}\label{rem:torsion-fiber-betti}
For $t\in \Delta$ and $x\in \mathcal A_t(\mathbb{C})$, the point $x$ is $N$--torsion
(resp.\ of exact order $N$) in $\mathcal A_t(\mathbb{C})$ if and only if $b_\Delta(x)$
is $N$--torsion (resp.\ of exact order $N$) in $\mathbb{T}^{2g}$.
\end{remark}

\subsection{A relative Silverman theorem via the Betti map}

\begin{lemma}\label{lem:torsion-open}
Let $m\ge 2$ and let $U\subset \mathbb{T}^{m}$ be a non-empty open subset
(for the usual real manifold topology).
Then there exists an integer $N_0\ge 1$ such that for every integer $N\ge N_0$
there exists a point of \emph{exact order} $N$ contained in $U$.
\end{lemma}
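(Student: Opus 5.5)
We reduce to the case where $U$ is a small cube. Shrinking $U$, we may assume it contains the image of an open box $\prod_{i=1}^m (c_i,c_i+\delta)$ with $0<\delta<1/2$. A point of $\mathbb{T}^m$ of the form $\frac1N(a_1,\dots,a_m)$ has exact order $N$ if and only if $\gcd(N,a_1,\dots,a_m)=1$, as recorded in Remark~\ref{rem:torsion-betti}. It therefore suffices to find, for every large $N$, integers $a_1,\dots,a_m$ with $a_i/N\in(c_i,c_i+\delta)$ and $\gcd(N,a_1,a_2)=1$. We only use two coordinates; this is where $m\ge 2$ enters.

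The choice of the first two coordinates is the heart of the argument. Each interval $(Nc_i,N(c_i+\delta))$ has length $N\delta$ and hence contains at least $\lfloor N\delta\rfloor-1$ integers. For $i\ge 3$ we pick $a_i$ arbitrarily in its interval. For $a_1$ and $a_2$ we take two \emph{consecutive} integers lying in the intervals for the first and second coordinates. Precisely, fix $a_1$ in the first interval and let $a_2$ range over the second. Since the second interval contains at least two consecutive integers $b,b+1$ once $N\delta\ge 3$, we can use $\gcd(b,b+1)=1$ as follows. Choose $a_1$ arbitrary and set $d=\gcd(N,a_1)$. Then $\gcd(N,a_1,a_2)$ divides both $\gcd(N,a_1,b)$ and $\gcd(N,a_1,b+1)$, and these two are coprime. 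So it is not immediate that one of them equals $1$, and a cleaner trick is needed.

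The cleaner route is to choose $a_1$ with $\gcd(a_1,N)=1$ directly. We need an integer coprime to $N$ in an interval of length $N\delta$. By Jacobsthal-type bounds, or elementarily, the maximal gap $j(N)$ between integers coprime to $N$ satisfies $j(N)\le 2^{\omega(N)}$, and $2^{\omega(N)}=N^{o(1)}$. Hence for $N$ large we have $j(N)<N\delta-1$, so such an $a_1$ exists. The $m\ge2$ assumption then becomes unnecessary, which is fine. To avoid the divisor-bound estimate we can instead use the two-coordinate trick. Take $a_1=b$ and $a_2=b+1$ when the two intervals overlap. They need not overlap, however, so it is better to use a Möbius/inclusion–exclusion count: the number of pairs $(a_1,a_2)$ in the box with $\gcd(N,a_1,a_2)=1$ is
\[
\sum_{d\mid N}\mu(d)\Big(\frac{N\delta}{d}+O(1)\Big)^2 \;=\; (N\delta)^2\prod_{p\mid N}\Big(1-\frac1{p^2}\Big)+O\big(N\delta\,\sigma_0(N)\big),
\]
and the main term is at least $\frac{6}{\pi^2}(N\delta)^2$. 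Since $\sigma_0(N)=o(N)$, this count is positive for $N\ge N_0$.

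The main obstacle is making this error term uniform, i.e.\ controlling $\sum_{d\mid N}|O(N\delta/d)|$ and the $O(1)^2$ terms by $\ll N\delta\,\sigma_0(N)+\sigma_0(N)$. Both are $o(N^2)$ by the standard divisor bound $\sigma_0(N)=O_\varepsilon(N^\varepsilon)$, so a threshold $N_0$ depending only on $\delta$ exists. With the pair $(a_1,a_2)$ found and the remaining $a_i$ chosen arbitrarily, the point $\frac1N(a_1,\dots,a_m)\bmod\mathbb{Z}^m$ lies in $U$ and has exact order $N$.
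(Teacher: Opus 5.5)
Your argument is correct. It shares the paper's basic idea: count the points $k/N$ with $k\in\mathbb{Z}^m$ in a small box, and discard those with $\gcd(N,k_1,\dots,k_m)>1$. The sieve, however, is run differently.

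The paper keeps all $m$ coordinates and uses only a one-sided union bound over the \emph{prime} divisors $\ell\mid N$, via $\#S_{N,\ell}=\#S_{N/\ell}$. This closes because $\sum_{\ell}\ell^{-m}<1$, which is exactly where $m\ge 2$ enters, and the error involves only $\nu(N)\le\log_2 N$ terms.

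You instead restrict to two coordinates and use the exact M\"obius identity over all $d\mid N$. This gives the main term $(N\delta)^2\prod_{p\mid N}(1-p^{-2})\ge\frac{6}{\pi^2}(N\delta)^2$, at the cost of an error summed over all squarefree divisors. For that error the trivial bound $\sigma_0(N)\le 2\sqrt N$ already gives $(2N\delta+1)\sigma_0(N)=o(N^2)$, so you do not need $\sigma_0(N)=O_\varepsilon(N^\varepsilon)$.

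Your one-coordinate alternative buys something the paper's method cannot. It finds an integer coprime to $N$ in an interval of length $N\delta$, via $j(N)\le 2^{\omega(N)}$, or directly by inclusion--exclusion, which gives at least $\delta\phi(N)-2^{\omega(N)}>0$ such integers for large $N$. This shows the lemma also holds for $m=1$. There the paper's union bound is too lossy, because $\sum_{\ell\mid N}\ell^{-1}$ can exceed $1$. In the application $m=2g\ge 2$, so this extra generality is not needed there.

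In a final write-up, drop the abandoned ``consecutive integers'' attempt. Its claim that $\gcd(N,a_1,a_2)$ divides both $\gcd(N,a_1,b)$ and $\gcd(N,a_1,b+1)$ is not right as stated; for $a_2\in\{b,b+1\}$ it equals one of them. Commit to one of the two valid routes. Note also that the M\"obius route does not ``avoid'' a divisor bound; it just needs a trivial one.
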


\begin{proof}
Let $q:\mathbb{R}^{m}\to \mathbb{T}^{m}=\mathbb{R}^m/\mathbb{Z}^m$ be the quotient map.
Choose an open box $B=\prod_{i=1}^m(a_i,b_i)\subset \mathbb{R}^m$ such that $q(B)\subset U$
and $q|_B$ is injective (e.g.\ take $B$ contained in a fundamental domain).
Write $L_i:=b_i-a_i>0$ and set $c:=\prod_{i=1}^m L_i=\mathrm{vol}(B)>0$.

For $N\ge 1$ define
\[
S_N:=\Big\{k=(k_1,\dots,k_m)\in \mathbb{Z}^m:\ \frac{k}{N}\in B\Big\}.
\]
Then $q(k/N)\in U$ is $N$--torsion for each $k\in S_N$.
Moreover, $q(k/N)$ has \emph{exact} order $N$ if and only if $\gcd(N,k_1,\dots,k_m)=1$.

Set
\[
s_i(N):=\#\{k_i\in \mathbb{Z}:\ a_iN<k_i<b_iN\}.
\]
Then $\#S_N=\prod_{i=1}^m s_i(N)$.
For each $i$ we have $|s_i(N)-L_i N|\le 2$ for all $N\ge 1$.
Expanding the product gives a uniform estimate: there exists $C_B>0$ (depending only on $B$ and $m$) such that
\begin{equation}\label{eq:SN-uniform}
\big|\#S_N-cN^m\big|\le C_B N^{m-1}\qquad\text{for all }N\ge 1.
\end{equation}

Let $\kappa_m:=\sum_{\ell\ \mathrm{prime}}\ell^{-m}$.
Since $m\ge 2$, we have $\kappa_m\le \sum_{n\ge 2}n^{-m}=\zeta(m)-1<1$.

Assume for contradiction that for some $N$, every $k\in S_N$ satisfies $\gcd(N,k_1,\dots,k_m)>1$.
Then for each $k\in S_N$ there exists a prime divisor $\ell\mid N$ dividing every coordinate $k_i$.
Hence
\[
S_N\subseteq \bigcup_{\ell\mid N} S_{N,\ell},
\qquad
S_{N,\ell}:=\{k\in S_N:\ \ell\mid k_1,\dots,\ell\mid k_m\}.
\]
So
\begin{equation}\label{eq:union}
\#S_N\le \sum_{\ell\mid N}\#S_{N,\ell}.
\end{equation}
If $\ell\mid N$, the map $k\mapsto k/\ell$ induces a bijection $S_{N,\ell}\simeq S_{N/\ell}$, so $\#S_{N,\ell}=\#S_{N/\ell}$.
Using \eqref{eq:union} and \eqref{eq:SN-uniform}, we obtain
\[
\#S_N
\le \sum_{\ell\mid N}\#S_{N/\ell}
\le \sum_{\ell\mid N}\Big(c(N/\ell)^m + C_B (N/\ell)^{m-1}\Big)
\le c\kappa_m N^m + C_B N^{m-1}\sum_{\ell\mid N}1.
\]
Let $\nu(N)$ be the number of distinct prime divisors of $N$. Then $\sum_{\ell\mid N}1=\nu(N)$ and
$2^{\nu(N)}\le N$, hence $\nu(N)\le \log N/\log 2$.
Therefore
\[
\#S_N\le c\kappa_m N^m + C_B N^{m-1}\frac{\log N}{\log 2}.
\]
On the other hand, \eqref{eq:SN-uniform} gives $\#S_N\ge cN^m-C_BN^{m-1}$.
Thus
\[
c(1-\kappa_m)N^m \le C_B N^{m-1}\Big(1+\frac{\log N}{\log 2}\Big),
\]
which is impossible for $N$ sufficiently large since $1-\kappa_m>0$ and $m\ge 2$.
Hence for all large $N$ there exists $k\in S_N$ with $\gcd(N,k_1,\dots,k_m)=1$, and then $q(k/N)\in U$
has exact order $N$.
\end{proof}

We will apply Lemma~\ref{lem:torsion-open} in the case $m=2g\ge 2$ as follows.

\begin{theorem}\label{gen-sub}
Let $\pi:\mathcal A\to S$ be an abelian scheme over a smooth irreducible
quasi-projective variety $S/\mathbb{C}$ of relative dimension $g\ge 1$, and let
$P:S\to \mathcal A$ be a non-torsion section.
Assume that the Betti map restricted to $X:=P(S)$ is generically submersive, i.e.
\[
\mathrm{rank}_{\mathbb{R}}(db_\Delta|_{X})=2g.
\]
Then for every sufficiently large integer $N$ there exists $s\in S(\mathbb{C})$ such that
$P(s)$ has \emph{exact order} $N$ in the fiber $\mathcal A_s(\mathbb{C})$.
\end{theorem}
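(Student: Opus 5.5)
The plan is to reduce Theorem~\ref{gen-sub} to Lemma~\ref{lem:torsion-open} with $m=2g$, via the submersivity of the Betti map and Remark~\ref{rem:torsion-fiber-betti}.

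First, fix a simply connected open $\Delta\subset S^{an}$ on which $b_\Delta$ is defined. By Definition~\ref{def:betti-rank} and the identity $b_\Delta|_{X\cap\mathcal A_\Delta}=b_\Delta\circ P$, there is a point $s_0\in\Delta$ with $\mathrm{rank}_{\mathbb R}\,d(b_\Delta\circ P)_{s_0}=2g$. This uses that the rank is independent of $\Delta$; if needed I would move $\Delta$ around, since the maximal rank is attained on a dense open subset of $S^{an}$ by real-analyticity.

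Next, the set where the rank equals $2g$ is open, being the complement of the vanishing locus of the $2g\times 2g$ minors of the real Jacobian. So $b_\Delta\circ P$ is a submersion on some open neighbourhood $V\subset\Delta$ of $s_0$. Real-analytic submersions are open maps, by the local normal form, i.e.\ the implicit function theorem. Hence $U:=b_\Delta(P(V))$ is a non-empty open subset of $\mathbb T^{2g}$.

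Then I apply Lemma~\ref{lem:torsion-open} with $m=2g\ge 2$. It gives $N_0$ such that for every $N\ge N_0$ there is a point $u\in U$ of exact order $N$ in $\mathbb T^{2g}$. Pick $s\in V$ with $b_\Delta(P(s))=u$. Since $b_\Delta$ restricted to the fiber $\mathcal A_s(\mathbb C)$ is a group isomorphism onto $\mathbb T^{2g}$, Remark~\ref{rem:torsion-fiber-betti} shows that $P(s)$ has exact order $N$ in $\mathcal A_s(\mathbb C)$. Finally, $s\in S^{an}$ is a point of $S(\mathbb C)$.

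The only step needing care is the first one: justifying that generic submersivity provides an actual point of full rank inside a single chart $\Delta$ where $b_\Delta$ is a well-defined trivialization. This is essentially built into the definition of the generic Betti rank, so I expect no real obstacle. The non-torsion hypothesis is not needed beyond being implicit in submersivity, since a torsion section has locally constant Betti coordinates and hence rank $0$.
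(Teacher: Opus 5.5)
Your proposal is correct and follows essentially the same route as the paper. You extract a point $s_0$ of full Betti rank $2g$ inside a chart $\Delta$, use local openness of a submersion to get a non-empty open $U\subset\mathbb{T}^{2g}$ inside the image of $b_\Delta\circ P$, apply Lemma~\ref{lem:torsion-open} with $m=2g\ge 2$, and transfer exact order back to the fiber via Remark~\ref{rem:torsion-fiber-betti}. The only cosmetic difference is that you first extend submersivity to a neighbourhood using the Jacobian minors, whereas the paper applies the local submersion theorem directly at $s_0$.
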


\begin{proof}
By the definition of the generic Betti rank (Definition~\ref{def:betti-rank}), the hypothesis
$\mathrm{rank}_{\mathbb{R}}(db_\Delta|_X)=2g$ means that there exist a point
$x_0\in X^{\mathrm{sm}}(\mathbb{C})$ and a simply connected open neighborhood
$\Delta\subset S^{\mathrm{an}}$ of $s_0:=\pi(x_0)$ such that $x_0\in \mathcal A_\Delta$ and
\[
\mathrm{rank}_{\mathbb{R}}\big(d(b_\Delta|_{X\cap\mathcal A_\Delta})_{x_0}\big)=2g.
\]
Since $\dim_{\mathbb{R}}(\mathbb{T}^{2g})=2g$, this differential is surjective, hence
$b_\Delta|_{X\cap\mathcal A_\Delta}$ is a submersion at $x_0$. Since $\pi$ is separated, the section $p$ is a (holomorphic) embedding, and $X\cap \mathcal A_\Delta=p(\Delta)$.
Equivalently, the real-analytic map $b_\Delta\circ P:\Delta\to \mathbb{T}^{2g}$ has surjective differential at $s_0$.

By the real submersion theorem, a submersion is locally open. Hence, there exists an open neighborhood $W\subset \Delta$ of $s_0$
and an open neighborhood $U\subset \mathbb{T}^{2g}$ of $b_\Delta(x_0)$ such that
\[
U\ \subset\ (b_\Delta\circ p)(W).
\]
Replacing $\Delta$ by $W$, we may and do assume that $U\subset (b_\Delta\circ p)(\Delta)$. Since $2g\ge 2$, Lemma \ref{lem:torsion-open} yields an integer $N_0$ such that for each $N\ge N_0$ we can choose $u_N\in U$ of exact order $N$ in $\mathbb{T}^{2g}$.
For such $N$ pick $s_N\in \Delta$ with
\[
b_\Delta(p(s_N))=u_N.
\]
By Remark \ref{rem:torsion-fiber-betti}, $p(s_N)$ has exact order $N$ in the fiber
$\mathcal A_{s_N}(\mathbb{C})$.
\end{proof}

If $\mathrm{rank}_{\mathbb{R}}(db_\Delta|_{P(S)})=2g$, then $p$ is automatically non-torsion: indeed, if $p$ factored through $\mathcal A[N]$ for some $N$, then $p(\Delta)$ would be locally constant
(since $\mathcal A[N]\to S$ is finite \'etale), and hence $b_\Delta\circ p$ could not be a submersion.
Moreover, the bound in Definition~\ref{def:betti-rank} shows that generic submersivity forces
$\dim S\ge g$.

\subsection{A criterion via the relative Manin--Mumford conjecture}

Let $\mathcal A_{\mathrm{tor}}\subset \mathcal A(\mathbb{C})$ denote the union over all
$s\in S(\mathbb{C})$ of the torsion subgroup $\mathcal A_s(\mathbb{C})_{\mathrm{tor}}$.

\begin{definition}\label{def:ZX}
For $N\in\mathbb Z$, let $[N]:\mathcal A\to\mathcal A$ denote the multiplication-by-$N$ morphism.
For a subvariety $X\subset\mathcal A$ we set
\[
\mathbb{Z}X \ :=\ \bigcup_{N\in\mathbb{Z}} [N](X).
\]
We say that $\mathbb{Z}X$ is \emph{Zariski dense} in $\mathcal A$ if its Zariski closure equals $\mathcal A$.
\end{definition}

\begin{corollary}\label{cor:GH23-to-silverman}
Keep the notation of Theorem~\ref{gen-sub} and set $X:=P(S)\subset \mathcal A$.
Assume that $\mathbb{Z} X$ is Zariski dense in $\mathcal A$ and that
$X(\mathbb{C})\cap \mathcal A_{\mathrm{tor}}$ is Zariski dense in $X$.
Then the conclusion of Theorem~\ref{gen-sub} holds.
\end{corollary}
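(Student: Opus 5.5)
The plan is to reduce the statement to Theorem~\ref{gen-sub}. That theorem already gives the conclusion once the Betti map restricted to $X=P(S)$ is generically submersive, so the only work is to show that the two density hypotheses force $\mathrm{rank}_{\mathbb{R}}(db_\Delta|_X)=2g$. The route goes through the relative Manin--Mumford theorem of Gao--Habegger \cite[Theorem~1.3]{GH23}, which we use in its contrapositive form. For an irreducible subvariety $X\subset\mathcal A$ with $\mathbb{Z}X$ Zariski dense in $\mathcal A$ (no "degenerate" structure), it says: if $X(\mathbb{C})\cap\mathcal A_{\mathrm{tor}}$ is Zariski dense in $X$, then $\dim X\ge g$. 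Equivalently, a subvariety whose torsion points are dense and which is not contained in a proper "flat" subgroup-scheme-like object must be large.

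Step one is to record the Betti-rank formulation underlying \cite{GH23}. The proof there passes through the degeneracy locus: $X$ is non-degenerate exactly when $\mathrm{rank}_{\mathbb{R}}(db_\Delta|_X)=2\dim X$ generically. Gao's criterion \cite{Gao20} (see also \cite[\S1.1]{GH23}) says that if $\mathbb{Z}X$ is Zariski dense in $\mathcal A$ and $\dim X\ge g$, then $X$ is non-degenerate in the sense that the generic Betti rank equals $2\min(\dim X,g)$. I will invoke this criterion and combine it with Gao--Habegger's conclusion $\dim X\ge g$ to obtain $\mathrm{rank}_{\mathbb{R}}(db_\Delta|_X)=2g$.

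Step two is to check that $P$ is non-torsion, which is needed to apply Theorem~\ref{gen-sub}. If $P$ were torsion, say $X\subset\mathcal A[N]$, then $\mathbb{Z}X$ would lie in the finite union $\mathcal A[N]$. That is a proper closed subset because $g\ge1$, contradicting density. In the same spirit one notes $\pi(X)=S$, so Definition~\ref{def:betti-rank} applies. With both steps in place, Theorem~\ref{gen-sub} gives, for every large $N$, a point $s$ with $P(s)$ of exact order $N$.

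The main obstacle is step one, namely quoting the right form of the Gao--Habegger and Gao non-degeneracy results so that the density hypotheses translate exactly into maximal Betti rank $2g$, rather than into the weaker $2\dim X$. In their language, torsion density forces $X$ to be degenerate unless $\dim X\ge g$. I would first try to use \cite[Theorem~1.3]{GH23} directly in the form "if $X\cap\mathcal A_{\mathrm{tor}}$ is dense, then $X$ is degenerate or $\mathbb{Z}X$ is not dense." Combining this with Gao's theorem that $\mathbb{Z}X$ dense implies the generic Betti rank equals $2\min(\dim X,g)$, I would then argue that rank $2\dim X<2g$ is incompatible with torsion density.
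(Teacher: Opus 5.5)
Your Step two (non-torsionness of $P$ via $\mathcal A[N]$, and $\pi(X)=S$) is fine, and reducing to Theorem~\ref{gen-sub} is the right strategy. However, Step one rests on a false statement, so there is a genuine gap.

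There is no unconditional ``Gao criterion'' saying that $\mathbb{Z}X$ Zariski dense in $\mathcal A$ together with $\dim X\ge g$ forces $\mathrm{rank}_{\mathbb{R}}(db_\Delta|_X)=2\min(\dim X,g)$. The generic-rank results of \cite{Gao20} involve additional non-degeneracy hypotheses.

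Here is a concrete counterexample. Let $A$ be a simple abelian variety of dimension $g$, let $S=\mathbb{A}^g$, let $\mathcal A=A\times S$ be the constant family, and let $P(s)=(a,s)$ with $a\in A(\mathbb{C})$ non-torsion.
\begin{itemize}
\item The periods are constant, so $b_\Delta\circ P$ is constant and the Betti rank is $0$.
\item Yet $\dim X=g$, and $\mathbb{Z}X=(\mathbb{Z}a)\times S$ is Zariski dense because $\overline{\mathbb{Z}a}=A$.
\end{itemize}
Hence the chain ``torsion density $\Rightarrow \dim X\ge g$'' (this is \cite[Theorem~1.1]{GH23}) followed by your criterion never rules out the degenerate case $\mathrm{rank}_{\mathbb{R}}(db_\Delta|_X)<2\min(\dim X,g)$. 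No dimension count can rule it out. The example has no torsion points on $X$, so it does not contradict the corollary. What it shows is that the torsion-density hypothesis must enter in a stronger way than through $\dim X\ge g$.

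Your closing paraphrase of \cite[Theorem~1.3]{GH23}, as ``torsion dense implies $X$ degenerate or $\mathbb{Z}X$ not dense,'' is also not what that theorem says.

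The missing input is \cite[Theorem~1.3]{GH23} in the form the paper uses: when $\mathbb{Z}X$ is Zariski dense in $\mathcal A$, Zariski density of $X(\mathbb{C})\cap\mathcal A_{\mathrm{tor}}$ in $X$ is \emph{equivalent} to $\mathrm{rank}_{\mathbb{R}}(db_\Delta|_X)=2g$. Quoting this directly gives generic submersivity in one step, and Theorem~\ref{gen-sub} finishes the proof. No separate dimension bound or Gao-type criterion is needed. Non-torsionness of $P$ is then automatic from rank $2g$, though your direct argument for it is also correct.
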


\begin{proof}
By \cite[Theorem~1.3]{GH23}, under the hypothesis that $\mathbb{Z} X$ is Zariski dense in $\mathcal A$,
the Zariski denseness of $X(\mathbb{C})\cap \mathcal A_{\mathrm{tor}}$ in $X$ is equivalent to the maximal
Betti rank $\mathrm{rank}_{\mathbb{R}}(db_\Delta|_X)$ $=2g$.
Hence $b_\Delta|_X$ is generically submersive and Theorem~\ref{gen-sub} applies.
\end{proof}

\begin{remark}\label{rem:dimS}
If $\dim S<g$ and $\mathbb{Z} X$ is Zariski dense in $\mathcal A$, then
$X(\mathbb{C})\cap \mathcal A_{\mathrm{tor}}$ is \emph{not} Zariski dense in $X$
by \cite[Theorem~1.1]{GH23}.
In particular, if $S$ is a curve and $g\ge 2$, then $X(\mathbb{C})\cap \mathcal A_{\mathrm{tor}}$
is finite.
\end{remark}

\subsection{Curves and a Zilber--Pink type characterization}\label{section: Zilber--Pink}

We end with a standard obstruction for curves in a \emph{fixed} abelian variety,
which is a strong case of the Zilber--Pink philosophy proved by Barroero--Dill \cite{BD22}.

Let $A$ be an abelian variety over an algebraically closed field of characteristic $0$.
A \emph{special subvariety} of $A$ means an irreducible component of an algebraic subgroup
of $A$ (equivalently, a translate of an abelian subvariety by a torsion point).
For $k\ge 0$, let $A^{[k]}$ denote the union of all special subvarieties of $A$
of codimension at least $k$.
Note that $A^{[g]}=A_{\mathrm{tor}}$, since special subvarieties of codimension $\ge g$
are precisely $0$-dimensional special subvarieties, i.e.\ torsion points.

\begin{theorem}[Barroero--Dill \cite{BD22}]\label{thm:BD}
Let $A$ be an abelian variety over an algebraically closed field of characteristic $0$
and let $V\subset A$ be a curve. Then $V\cap A^{[2]}$ is finite unless $V$ is contained
in a proper algebraic subgroup of $A$.
\end{theorem}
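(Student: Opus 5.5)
Since this is a deep external result, I would follow the strategy by which it is actually proved: first treat $A$ and $V$ defined over $\overline{\mathbb Q}$ with the Pila--Zannier method, then descend from an arbitrary algebraically closed field of characteristic $0$ by a specialization argument. Throughout we may assume $V$ is not contained in a proper algebraic subgroup; otherwise there is nothing to show.

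\emph{Step 1 (the case over $\overline{\mathbb Q}$).} Each point $x\in V\cap A^{[2]}$ lies in a special subvariety $T+\zeta$ of codimension $\ge 2$, with $\zeta$ torsion. Projecting along $T$ exhibits $x$ as a point of $V$ satisfying two independent ``linear relations'' on $A$.
\begin{enumerate}
\item[(a)] \emph{Height bound.} I would invoke the bounded height theorem of R\'emond and Viada (a Vojta/Faltings-type inequality). For a curve not contained in a proper subgroup, it bounds the N\'eron--Tate height of points of $V\cap A^{[2]}$, and more generally of $V\cap(A^{[2]}+\Gamma)$, uniformly.
\item[(b)] \emph{Galois orbit lower bound.} Using the height bound and Masser--W\"ustholz isogeny estimates (in David's form), I would show that the Galois orbit of such an $x$ has size $\gg \Delta(T)^{\delta}$ for some $\delta>0$. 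Here $\Delta(T)$ denotes the complexity of the subgroup, that is, the degree of $T$ together with the order of $\zeta$.
\item[(c)] \emph{Counting.} Pull $V$ back through the uniformization $\mathbb C^g\to A(\mathbb C)$ to a definable set in $\mathbb R_{\mathrm{an}}$ restricted to a fundamental domain. Points of $V\cap A^{[2]}$ then become rational points of bounded height on a definable family parametrized by the relevant subgroups. The Pila--Wilkie theorem gives an upper bound $\ll \Delta^{\epsilon}$ for these points outside the algebraic part.
\item[(d)] \emph{Algebraic part.} By Ax's theorem (Ax--Lindemann for abelian varieties), any positive-dimensional semialgebraic block inside the algebraic part would force $V$ into a coset of a proper abelian subvariety. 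This is excluded.
\end{enumerate}
Comparing (b) with (c) bounds $\Delta$, which leaves finitely many special subvarieties. Each of these meets $V$ in finitely many points, since $V\not\subset T+\zeta$.

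\emph{Step 2 (general field).} Let $A$ and $V$ be defined over a finitely generated field $F\supset\overline{\mathbb Q}$. I would split off the $F/\overline{\mathbb Q}$-trace, using the Chow trace and Poincar\'e reducibility to write $A$ up to isogeny as $A_0\times B$ with $A_0$ constant.
\begin{enumerate}
\item[(a)] If $B=0$, then after an isogeny $A$ is defined over $\overline{\mathbb Q}$. A curve not defined over $\overline{\mathbb Q}$ is contained in no special subvariety of positive dimension, and a specialization argument reduces to Step 1 applied to generic specializations.
\item[(b)] In general, I would spread everything out to an abelian scheme over a $\overline{\mathbb Q}$-variety. I would then use the function-field height inequality (Silverman's specialization theorem) together with the relative Manin--Mumford / Betti-rank results recalled above (\cite{GH23}) to control intersections with the non-constant part.
\end{enumerate}

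The main obstacle is Step 2(b): the mixed case where $V$ genuinely moves in a non-isotrivial family. There the height bound of Step 1(a) is not available directly, and one needs a uniform version over a base. I would first try to prove that version by specializing at points of large height on the base and applying Silverman's theorem to retain non-degeneracy.
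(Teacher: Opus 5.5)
The paper does not prove this statement; it imports it from \cite{BD22}, whose contribution is the passage from the case over $\overline{\mathbb Q}$ (due to Habegger--Pila) to an arbitrary algebraically closed field of characteristic $0$. That passage is the first genuine gap in your proposal. Step~2 is a list of tools, not an argument, and you leave 2(b) open yourself.

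Even 2(a) does not work as described. A specialization $t\in T(\overline{\mathbb Q})$ of the parameter variety cannot be ``generic''. The set of $t$ with $V_t$ inside a proper algebraic subgroup is a countable union of proper closed subsets, possibly Zariski dense, while $T(\overline{\mathbb Q})$ is countable. So preserving the hypothesis already needs an arithmetic specialization theorem. Worse, even for a good $t$, infinitely many points of $V\cap A^{[2]}$ may specialize onto finitely many points of $V_t\cap A^{[2]}$: a torsion point $\tau\in V_t$ lies in $H+\tau$ for every $H$. Finiteness for each single specialization therefore gives no contradiction without a statement uniform in $t$, which you never formulate.

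The inputs you propose for 2(b) do not reach the problem. \cite{GH23} and the Betti-rank criterion concern torsion points, i.e.\ $A^{[g]}$, and say nothing about positive-dimensional special subvarieties of codimension $2$ when $g\ge 3$. Silverman's specialization theorem controls injectivity on Mordell--Weil groups, not this family of intersections.

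The second gap is in Step~1(d). The hypothesis only excludes proper algebraic subgroups (torsion cosets). It allows $V\subset B+x$ with $B\subsetneq A$ an abelian subvariety and $x$ non-torsion, so ``this is excluded'' is false. For $V=E\times\{y\}\subset E\times E'$ with $y$ non-torsion, the hypothesis holds. But $\exp^{-1}(V)$ is a union of complex lines, so your definable set is entirely semialgebraic and Pila--Wilkie yields nothing. The output of Ax's theorem, that $V$ lies in a proper coset, is then no contradiction. (For the family count you need Ax's theorem in its Schanuel form, not only Ax--Lindemann.)

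You must first reduce to the transverse case, as follows.
\begin{itemize}
\item Take $B+x$ minimal containing $V$, so $V-x$ is transverse in $B$.
\item If $V$ meets $H+\zeta$ with $\mathrm{codim}\,H\ge 2$, then $B+H=A$. Otherwise $V\subset B+H+\mathbb Z\zeta$, a proper algebraic subgroup.
\item Hence $\mathrm{codim}_B(B\cap H)=\mathrm{codim}\,H\ge 2$.
\item Run the count inside $B$ with the Betti coordinates of $x$ as fixed shifts. Ax's theorem now contradicts the transversality of $V-x$ in $B$.
\end{itemize}
Relatedly, your parenthetical claim in (a) about $V\cap(A^{[2]}+\Gamma)$ fails for weakly transverse $V$. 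If $\mathrm{codim}\,B\ge 2$ and $x\in\Gamma$, then $V\subset B+\Gamma\subset A^{[2]}+\Gamma$, and this intersection has unbounded height.
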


\begin{corollary}\label{Zilber--Pink}
Let $A$ be an abelian variety over an algebraically closed field of characteristic $0$
of dimension $g\ge 2$, and let $V\subset A$ be an irreducible curve.
Assume that for every sufficiently large integer $N$ there exists a point $x_N\in V$
of exact order $N$ in $A$.
Then $V$ is contained in a proper algebraic subgroup of $A$.
\end{corollary}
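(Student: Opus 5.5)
The plan is to argue by contradiction using Theorem~\ref{thm:BD}. Suppose $V$ is not contained in any proper algebraic subgroup of $A$. Then Theorem~\ref{thm:BD} shows that $V\cap A^{[2]}$ is finite. Since $g\ge 2$, every torsion point is a $0$-dimensional special subvariety of codimension $g\ge 2$. Hence $A_{\mathrm{tor}}=A^{[g]}\subset A^{[2]}$, and so $V\cap A_{\mathrm{tor}}$ is finite.

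Next, we use the hypothesis. There is an $N_0$ such that for every $N\ge N_0$ we can choose $x_N\in V$ of exact order $N$. Points of distinct exact orders are distinct, so the map $N\mapsto x_N$ is injective on $\{N\ge N_0\}$. This gives infinitely many distinct points of $V\cap A_{\mathrm{tor}}$, contradicting the finiteness just obtained. Therefore $V$ must lie in a proper algebraic subgroup of $A$.

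The only points needing care are the inclusion $A_{\mathrm{tor}}\subset A^{[2]}$ and irreducibility. For the inclusion, $\{x\}$ for a torsion point $x$ is a translate of the trivial abelian subvariety by a torsion point, so it is special of codimension $g$. Since $g\ge 2$, it counts in $A^{[2]}$; this is exactly where the hypothesis $g\ge2$ is used. Irreducibility of $V$ is only needed to apply Theorem~\ref{thm:BD} as stated. I do not expect a real obstacle here, since all the depth lies in the cited Zilber--Pink result of Barroero--Dill, and the corollary is a pigeonhole on distinct orders.
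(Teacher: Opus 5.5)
Your proof is correct and is essentially the paper's argument: both use $A_{\mathrm{tor}}=A^{[g]}\subseteq A^{[2]}$ (valid because $g\ge 2$), so infinitely many torsion points on $V$ make $V\cap A^{[2]}$ infinite, and Theorem~\ref{thm:BD} then forces $V$ into a proper algebraic subgroup. You phrase it as a contradiction and make explicit, via distinct exact orders, why $V\cap A_{\mathrm{tor}}$ is infinite, a step the paper leaves implicit.
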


\begin{proof}
The assumption implies that $V\cap A_{\mathrm{tor}}$ is infinite.
Since $g\ge 2$, every torsion point has codimension $g\ge 2$, hence
$A_{\mathrm{tor}}=A^{[g]}\subseteq A^{[2]}$.
Therefore $V\cap A^{[2]}$ is infinite, and Theorem \ref{thm:BD} forces
$V$ to be contained in a proper algebraic subgroup of $A$.
\end{proof}

\appendix

\section*{Appendix A. Schinzel's theorem for global function fields}
In this appendix, we prove a version of Schinzel's theorem for tori over global function fields. For multiplicative groups over rational function fields, we refer to \cite[ Theorem~1.3]{Fla10}. For the sake of the convenience of the readers, we present a different proof. The characteristic $p$ in this appendix is not necessarily different from 2 or 3.
\begin{manualproposition}{A.1}\label{multiplicative group}
Let $K$ be a global function field with constant field $\mathbb{F}_q$ of characteristic $p$,
and let $x\in K^\times$ be not a root of unity. Then for every integer $n>1$ coprime to $p$, there exists a place $v\in M_K$ with $v(x)=0$ such that the order of the reduction of $x$ in $\kappa_v^\times$ equals $n$. 
\end{manualproposition}
\begin{manualremark}{A.1}
    Again, the condition $n$ being coprime to $p$ is necessary. Indeed, for any $x\in K$, we have $x^{np}-1=(x^n-1)^p$. Therefore, if $v(x^{np}-1)>0$ for some place $v$, then $v(x^{n}-1)>0$.
\end{manualremark}
\begin{proof}We denote by $\mu$ the M\"obius function, by $\Phi_n$ the $n$-th cyclotomic polynomial, and by $\phi$ the Euler totient function. We set $W:=\{v\in M_K:v(x^n-1)>0\}.$ We need to find $v$ such that $v(x)\geq0$ and $\mathrm{ord}_v(x)=n$. We consider the following four cases. 
\begin{enumerate}
    \item $v\in W$ such that $\mathrm{ord}_v(x)\neq n$, we call this order by $n_0.$   Then $n=n_0k$ for some positive integer $k$ and
 $$x^n-1=(x^{n_0}-1)(x^{n_0(k-1)}+x^{n_0(k-2)}+...+x^{n_0}+1). $$
 Since $x^{n_0(k-1)}+x^{n_0(k-2)}+...+x^{n_0}+1\equiv k$  and $(k,p)=1$, we have $v(x^n-1)=v(x^{n_0}-1)$. Thus 
\begin{equation} 
\begin{split}
v(\Phi_n(x)) & =\sum_{m|n}\mu\Big(\dfrac{n}{m}\Big)v(x^m-1)   = \sum_{n_0|m|n}\mu\Big(\dfrac{n}{m}\Big)v(x^m-1)\\
 & = \sum_{n_0|m|n}\mu\Big(\dfrac{n}{m}\Big)v(x^{n_0}-1)
=0\text{ since }n_0<n.
\end{split}
\end{equation}
\item $v\in M_K$ satisfying $v(x)>0$. Then $v(x^m-1)=0$ for all positive integers $m$. It implies that $v(\Phi_n(x))=0.$
\item $v\in M_K$ satisfying $v(x)<0$. Then $v(x^m-1)=v(1-x^{-m})+v(x^m)=mv(x).$ Hence 
\begin{align*}
v(\Phi_n(x)) =\sum_{m|n}\mu\Big(\dfrac{n}{m}\Big)v(x^m-1)   = \sum_{m|n}\mu\Big(\dfrac{n}{m}\Big)mv(x)= \phi(n)v(x).
\end{align*}
\item $v\in M_K$ satisfying $v(x)=0$ and $v\not\in W$. Then $v(x^m-1)=0$ for all $m\mid n$,
and hence $v(\Phi_n(x))=0$.
\end{enumerate}
Combining the four cases and using the product formula for the nonzero element $\Phi_n(x)\in K^\times$, we get
\[
0=\sum_{v\in M_K} v(\Phi_n(x))\deg(v)
=\phi(n)\sum_{\substack{v\in M_K\\ v(x)<0}} v(x)\deg(v).
\]
Each term in the last sum is non-positive, and it is strictly negative whenever $v(x)<0$.
Hence the sum can vanish only if $v(x)\ge 0$ for all $v$.
Thus $x$ has no pole on the smooth projective curve with function field $K$, so $x\in \mathbb F_q^\times$.
But every nonzero element of $\mathbb F_q$ is a root of unity, contradicting the hypothesis on $x$.
This contradiction proves the proposition.
\end{proof}

\begin{manualtheorem}{A.1}\label{Torus}
Let $G$ be a one-dimensional torus over a global function field $K$, and let $P\in G(K)$ be a non-torsion point. Then for every sufficiently large integer $n$ coprime to $p$, there exists a place $v\in M_K$ such that the reduction of $P$ at $v$ exists and
\[
\mathrm{ord}_v(P)=n.
\]
\end{manualtheorem}

\begin{proof}
Choose a finite set of places $S\subset M_K$ such that $G$ extends to a smooth affine group scheme
\[
\mathcal G\longrightarrow \Spec(\mathcal O_{K,S})
\]
and such that $P\in \mathcal G(\mathcal O_{K,S})$.
Let $K'/K$ be a finite Galois extension that splits $G$, and let $S'$ be the set of places of $K'$ lying over $S$.
Choose an isomorphism of $K'$-groups
\[
\psi:G_{K'}\xrightarrow{\sim}\mathbb G_{m,K'}.
\]
After enlarging $S$ (and hence $S'$) if necessary, this isomorphism spreads out to an isomorphism of group schemes
\[
\psi:\mathcal G\times_{\Spec\mathcal O_{K,S}}\Spec\mathcal O_{K',S'}
\xrightarrow{\sim}
\mathbb G_{m,\mathcal O_{K',S'}}.
\]
Let
\[
x':=\psi(P_{K'})\in \mathbb G_m(K')=K'^\times.
\]
Since $P$ is non-torsion and $\psi$ is an isomorphism on the generic fiber, $x'$ is not a root of unity.

Applying Proposition~\ref{multiplicative group} over $K'$, we obtain an integer $N_0$ such that for every integer $n>N_0$ with $\gcd(n,p)=1$, there exists a place $w\in M_{K'}\setminus S'$ satisfying
\[
\mathrm{ord}_w(x')=n
\]
in $\kappa_w^\times$.
Let $v\in M_K\setminus S$ be the place below $w$.
Reducing the isomorphism $\psi$ modulo $w$ yields an isomorphism of special fibers
\[
\psi_w:\mathcal G_{\kappa_w}\xrightarrow{\sim}\mathbb G_{m,\kappa_w}.
\]
The reduction of $P$ at $v$, viewed in $\mathcal G(\kappa_w)$ via the natural injective map
\[
\mathcal G(\kappa_v)\hookrightarrow \mathcal G(\kappa_w),
\]
coincides with the reduction of $P_{K'}$ at $w$; under $\psi_w$ this element maps to $x'\bmod w$, which has order $n$.
Therefore the reduction of $P$ at $v$ already has order $n$ in $\mathcal G(\kappa_v)$.
Hence $\mathrm{ord}_v(P)=n$.
\end{proof}

\bibliographystyle{plain} 
\bibliography{references} 

\vspace{1cm}
\author{Khai-Hoan Nguyen-Dang}\\
\email{Khaihoann@gmail.com}\\
\address{Morningside Center of Mathematics, Chinese Academy of Sciences, Beijing, China}\\
 
\author{Quang-Khai Nguyen} \\
\email{nguyen@math.univ-lyon1.fr} \\
\address{Camille Jordan Institute, Claude Bernard University Lyon 1\\ 21 avenue Claude Bernard, 69100 Villeurbanne, France}
\end{document}